\documentclass[12pt,leqno]{amsart}

\usepackage{amsmath,amssymb,amsthm}
\usepackage{cite}
\usepackage[greek,english]{babel}
\usepackage{lmodern}
\usepackage[margin=1.45in,footskip=1.5cm]{geometry}
\usepackage{mathtools}
\usepackage{microtype}
\usepackage[T1]{fontenc}
\usepackage[utf8]{inputenc}

\makeatletter
\@namedef{subjclassname@2020}{\textup{2020} Mathematics Subject Classification}
\makeatother

\newtheorem{theorem}{Theorem}[section]
\theoremstyle{definition}
\newtheorem{definition}[theorem]{Definition}

\title[Modal sentential calculi]{On the foundations of logic and probability, I\\ Modal sentential calculi}

\author{Lo\"{i}c Cosyns}
\address{Department of Mathematical Sciences, University of Durham, UK}
\email{l.j.g.cosyns@gmail.com}
\thanks{This paper is an abridged version of the second chapter of the author’s thesis \cite{Cosyns26} presented for the degree of Doctor of Philosophy of the University of Durham (UK) and is the first of a series on the foundations of logic and probability. A companion paper on modal probability spaces is in preparation; a third paper on formal agnoiology, a field of research introduced herein, and a fourth paper on iterated modalities are under consideration.}

\subjclass[2020]{Primary 03C99}
\keywords{Foundations of modal logic, foundations of probability}

\begin{document}

\begin{abstract}
A class of models which does not require an additional operator on the language of classical sentential calculi to capture some notion of necessity, possibility, and impossibility is introduced and investigated.
\end{abstract}

\maketitle


\section{Introduction}

This study on the foundations of modal logic finds its origins in the foundations of probability. In its most general acceptation, a probability is a strictly positive, finitely additive, potentially countably additive, normed measure on a Boolean algebra of events \cite{Kappos69,Kolmogorov48,Segal54}. Remarkably, ordinary Boolean algebras constitute the algebraic semantics of classical sentential calculi. It is not clear, however, whether events truly abide by the logic of those calculi. For if, intuitively, an event is something which can occur or not, that is to say, something which is possible, then events rather abide by the logic of modal sentential calculi, an argument supported by the fact that the meet of two possible events need not be possible in general.

Modal sentential calculi have a long tradition in mathematics and philosophy. The current view, inherited from the work of Lewis \cite{Lewis59} and G\"{o}del \cite{Godel86}, hereafter, the `Lewis--G\"{o}del paradigm', consists in adding to the language of classical sentential calculi an operator $\Box$ to capture the meaning of the predicate {\it is necessary} and an operator $\Diamond$ to capture the meaning of the predicate {\it is possible}. Various axiomatic systems based on Lewis's insight and, later, that of other logicians are then postulated and investigated using different methods \cite{Kripke59,Kripke63,McKinsey41,McKinseyTarski48}. It must be emphasised that this paradigm has been tremendously successful not only in philosophical logic but also in mathematics, with the advent, for example, of the class of Boolean algebras with operators for which purely mathematical results have been established \cite{JonssonTarski51,JonssonTarski52}.

Yet no serious attempts have been made by mathematicians to develop the calculus of probability on, say, McKinsey and Tarski's closure algebras \cite{McKinseyTarski44}, which constitute the algebraic semantics of Lewis's modal system S4, or Halmos's monadic Boolean algebras \cite{Halmos56}, which constitute the algebraic semantics of Lewis's modal system S5, the general consensus being that a revamping of the foundations of probability is, to paraphrase Tarski, neither necessary nor desirable. Hence, if events truly abide by the logic of modal sentential calculi whilst forming an ordinary Boolean algebra and the foundations of probability are correct, then, perhaps, a mistake has been made and a reconsideration of the foundations of modal logic in light of the foundations of probability is in order.

This study addresses that issue by introducing a class of models for modal sentential calculi which does not require an additional operator on the language of classical sentential calculi to capture some notion of necessity, possibility, and impossibility. The idea is simple and consists in defining, in a Tarskian fashion, modality directly at the level of the metalanguage instead of adding an operator $\Box$ or $\Diamond$ to the object-language. This is discussed in detail in Section 4 after some preliminary considerations in Sections 2 and 3. Note in passing that all the results presented herein are elementary, only a basic understanding of logic, algebra, topology, and measure theory being required; some experience with these matters, however, is expected of the reader.


\section{Preliminaries}

Let $\kappa\geq\omega$ be a regular cardinal \cite{ScottTarski58}. The language $\mathcal{L}$ of classical sentential calculi discussed in this study has a supply of $\kappa$ distinct sentential variables and permits the formation of disjunctions and conjunctions of all lengths $<\omega_1$. In other words, in addition to the logical operations of negation $-$ and material implication $\rightarrow$ common to all classical sentential calculi, the logical operations of countable disjunction $\bigvee$ and countable conjunction $\bigwedge$ are well defined in $\mathcal{L}$. Auxiliary parentheses ( and ), whose role is to disambiguate syntactic expressions, are also used unreservedly.

Let $\mathcal{S}$ be the least set of well-formed expressions built up from the sentential variables and logical operations of $\mathcal{L}$. Elements of $\mathcal{S}$ are called {\it formulas of the language $\mathcal{L}$ of classical sentential calculi}, or simply {\it formulas of $\mathcal{L}$}, and are denoted by $F$ with a subscript if necessary. The usual formation rules apply:\\
(i) Every sentential variable is a formula of $\mathcal{L}$;\\
(ii) If $F$ is a formula of $\mathcal{L}$, then so is $-F$;\\
(iii) If $F_1,F_2$ are formulas of $\mathcal{L}$, then so is $F_1\rightarrow F_2$;\\
(iv) If $F_0,F_1,F_2,\ldots$ are formulas of $\mathcal{L}$, then so is $\bigvee_{n\in\omega}F_n$;\\
(v) If $F_0,F_1,F_2,\ldots$ are formulas of $\mathcal{L}$, then so is $\bigwedge_{n\in\omega}F_n$.

\begin{theorem}
{\it The structure $\mathfrak{S}=(\mathcal{S};-,\rightarrow,\bigvee,\bigwedge)$, where $\mathcal{S}$ is the least set of formulas of the language $\mathcal{L}$ of classical sentential calculi, $-$ is the logical operation of negation, $\rightarrow$ is the logical operation of material implication, $\bigvee$ is the logical operation of countable disjunction, and $\bigwedge$ is the logical operation of countable conjunction, is a $\sigma$-complete abstract algebra.}
\end{theorem}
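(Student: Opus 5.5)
The plan is to check directly, from the formation rules (i)--(v), that $\mathcal{S}$ is closed under each operation and that each operation has a well-defined arity. A $\sigma$-complete abstract algebra is understood here in the sense of Sikorski. It is a nonempty set carrying finitely many operations, each of arity at most $\omega$: here one unary operation $-$, one binary operation $\rightarrow$, and two operations $\bigvee,\bigwedge$ of arity $\omega$, with every such operation defined on all sequences of elements. So the proof is essentially a verification.

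First I show that $\mathcal{S}$ is nonempty. Since $\kappa\geq\omega$, there are sentential variables, and by (i) each of them lies in $\mathcal{S}$.

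Second, I define the operations as maps. Negation is $F\mapsto -F$, a map $\mathcal{S}\to\mathcal{S}$ by (ii). Implication is $(F_1,F_2)\mapsto F_1\rightarrow F_2$, a map $\mathcal{S}^2\to\mathcal{S}$ by (iii). Countable disjunction and conjunction are $(F_n)_{n\in\omega}\mapsto\bigvee_{n\in\omega}F_n$ and $(F_n)_{n\in\omega}\mapsto\bigwedge_{n\in\omega}F_n$, maps $\mathcal{S}^\omega\to\mathcal{S}$ by (iv) and (v). Totality on all of $\mathcal{S}^\omega$ is exactly what $\sigma$-completeness demands.

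The one point needing care, and the main obstacle, is that ``the least set'' actually exists and is closed under the $\omega$-ary operations. An inductive definition with infinitary clauses does not close off at stage $\omega$. I would construct $\mathcal{S}$ as the union $\bigcup_{\alpha<\omega_1}\mathcal{S}_\alpha$ of a transfinite hierarchy. Here $\mathcal{S}_0$ is the set of variables, $\mathcal{S}_{\alpha+1}$ adds all applications of the operations to elements of $\mathcal{S}_\alpha$, and at limits one takes unions. Equivalently, $\mathcal{S}$ is the intersection of all sets of expressions containing the variables and closed under (ii)--(v). Closure of this union under $\bigvee$ and $\bigwedge$ uses the regularity of $\omega_1$. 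Given countably many $F_n\in\mathcal{S}_{\alpha_n}$, we have $\sup_n\alpha_n<\omega_1$, so all $F_n$ lie in a single $\mathcal{S}_\beta$ with $\beta<\omega_1$. Their disjunction and conjunction then lie in $\mathcal{S}_{\beta+1}$. Leastness follows by a routine induction on $\alpha$.

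If one also wants the operations to be genuinely free, with formulas built in a unique way, unique readability follows by the standard parenthesis-counting or rank argument. It is not needed for the algebra structure itself, since the operations are well-defined maps in any case. This completes the plan.
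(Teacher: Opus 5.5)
Your proposal is correct and matches the paper's approach: the paper gives no written proof and treats the theorem as immediate from the formation rules (i)--(v) (``it is just an abstract algebra in which countable disjunctions and countable conjunctions are well defined''), which is your closure-and-totality check. Your $\omega_1$-stage construction of the least set, using the regularity of $\omega_1$ to get closure under the $\omega$-ary operations, fills in a detail the paper leaves implicit rather than changing the argument.
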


The $\sigma$-complete abstract algebra $\mathfrak{S}$ of Theorem 2.1 is called the {\it algebra of formulas of the language $\mathcal{L}$ of classical sentential calculi}. Technically, there is nothing special about $\mathfrak{S}$; it is just an abstract algebra in which countable disjunctions and countable conjunctions are well defined.

Consider now a deductive system for classical sentential calculi (e.g. Scott and Tarski \cite{ScottTarski58}). The following theorem is fundamental.

\begin{theorem}
{\it Let $\mathcal{S}$ be the least set of formulas of the language $\mathcal{L}$ of classical sentential calculi and $\equiv$ be an equivalence relation on $\mathcal{S}$, any two formulas $F_1,F_2$ of $\mathcal{L}$ being logically equivalent if and only if they materially imply each other, that is to say,
\begin{gather*}
\textstyle{F_1\equiv F_2~\text{if and only if}~F_1\rightarrow F_2~\text{and}~F_2\rightarrow F_1.}
\end{gather*}
Then the structure ${\mathfrak{S}}/{\,\equiv}=({\mathcal{S}}/{\,\equiv};-,\rightarrow,\bigvee,\bigwedge)$, where ${\mathcal{S}}/{\,\equiv}$ is the least set of formulas of the language $\mathcal{L}$ of classical sentential calculi modulo logical equivalence, $-$ is the lattice operation of complementation, $\rightarrow$ is the lattice operation of relative pseudo-complementation, $\bigvee$ is the lattice operation of countable join, and $\bigwedge$ is the lattice operation of countable meet, is a $\sigma$-complete Boolean algebra.}
\end{theorem}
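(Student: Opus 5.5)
This is the Lindenbaum--Tarski construction carried out for the infinitary calculus of Scott and Tarski. Throughout, ``$F_1\rightarrow F_2$'' in the statement is read as ``$F_1\rightarrow F_2$ is provable in the fixed deductive system''. The plan is to verify, in order, that $\equiv$ is a congruence on $\mathfrak{S}$, that the quotient is a Boolean algebra, and that countable joins and meets exist and are computed by $\bigvee$ and $\bigwedge$.

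\textbf{Step 1: $\equiv$ is an equivalence relation.}
\begin{itemize}
\item Reflexivity comes from the provability of $F\rightarrow F$.
\item Symmetry is built into the definition.
\item Transitivity comes from hypothetical syllogism, which follows from the axioms and modus ponens.
\end{itemize}

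\textbf{Step 2: $\equiv$ is a congruence for all four operations.}
\begin{itemize}
\item For $-$, use contraposition: from $F_1\rightarrow F_2$ derive $-F_2\rightarrow -F_1$.
\item For $\rightarrow$, use the provable monotonicity of implication: antitone in the antecedent, monotone in the consequent.
\item For $\bigvee$ and $\bigwedge$, invoke the infinitary axioms and rules of Scott and Tarski:
\begin{itemize}
\item the axioms $F_m\rightarrow\bigvee_n F_n$ and $\bigwedge_n F_n\rightarrow F_m$;
\item the rule that from $F_n\rightarrow G$ for all $n$ one may infer $\bigvee_n F_n\rightarrow G$, together with its dual for $\bigwedge$.
\end{itemize}
If $F_n\equiv G_n$ for every $n$, then $F_n\rightarrow G_n\rightarrow\bigvee_n G_n$ for each $n$. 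The infinitary rule then gives $\bigvee_nF_n\rightarrow\bigvee_nG_n$, and the reverse implication follows symmetrically.
\end{itemize}
This is the step I expect to be the main obstacle. The countable case cannot be handled by finitary reasoning, because it needs the infinitary rule with $\omega$ premises. Here I would rely on the regularity of $\kappa$ and the choice of the Scott--Tarski system, which guarantee that such derivations are well defined. Since the paper fixes that system only by citation, I would state exactly which rules I am assuming.

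\textbf{Step 3: the order and the Boolean algebra structure.} Define $[F_1]\le[F_2]$ iff $F_1\rightarrow F_2$ is provable. By Steps 1 and 2 this is a well-defined partial order; antisymmetry holds precisely because of how $\equiv$ is defined. Binary join and meet are the cases $\bigvee$ and $\bigwedge$ of a sequence that repeats two formulas (or one may use $-F_1\rightarrow F_2$).
\begin{itemize}
\item The injection axioms together with the infinitary rule show that $[\bigvee_nF_n]$ is the least upper bound of $\{[F_n]\}$. Dually, $[\bigwedge_nF_n]$ is the greatest lower bound. This already gives $\sigma$-completeness once the Boolean structure is established.
\item Distributivity: use the classical tautology $F\wedge(G\vee H)\rightarrow(F\wedge G)\vee(F\wedge H)$, derivable in the finitary fragment.
\item Bounds and complements: set $1=[F\rightarrow F]$ and $0=[-(F\rightarrow F)]$. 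Then $[-F]$ is a complement of $[F]$, because $F\vee -F$ and $-(F\wedge -F)$ are theorems.
\item Relative pseudo-complement: the deduction theorem, together with the tautologies $(F\wedge G)\rightarrow H \leftrightarrow F\rightarrow(G\rightarrow H)$, shows that $[F\rightarrow G]$ is the largest $[H]$ with $[H]\wedge[F]\le[G]$. In a Boolean algebra this element equals $-[F]\vee[G]$.
\end{itemize}
This identifies the quotient operations as stated and concludes the proof.
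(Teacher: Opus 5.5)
Your proposal is correct and is the standard Lindenbaum--Tarski argument for the Scott--Tarski infinitary calculus, which is exactly what the paper relies on; the paper gives no proof of this theorem, it only fixes a Scott--Tarski-style deductive system and calls the result fundamental. One minor correction: in this paper $\kappa$ only counts the sentential variables, so its regularity plays no role, and what makes your $\omega$-premise derivations well defined is the regularity of $\omega_1$ (the bound on conjunction lengths), or simply defining the theorems as the least set closed under the axioms and rules.
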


The $\sigma$-complete Boolean algebra ${\mathfrak{S}}/{\,\equiv}$ of formulas of $\mathcal{L}$ modulo logical equivalence of Theorem 2.2 is called the {\it Lindenbaum--Tarski algebra of the language $\mathcal{L}$ of classical sentential calculi}. Lindenbaum--Tarski algebras ${\mathfrak{S}}/{\,\equiv}$ have interesting properties. Perhaps most importantly, since every axiom schema in the deductive system in question is a tautology, ${\mathfrak{S}}/{\,\equiv}$ is free in the class of all similar $\sigma$-complete Boolean algebras.

\begin{definition}
A Lindenbaum--Tarski algebra ${\mathfrak{S}}/{\,\equiv}$ of the language $\mathcal{L}$ of classical sentential calculi satisfies the {\it countable chain condition} (or is {\it ccc}) if every disjoint family of equivalence classes of formulas of $\mathcal{L}$ is countable.
\end{definition}

\begin{definition}
A Lindenbaum--Tarski algebra ${\mathfrak{S}}/{\,\equiv}$ of the language $\mathcal{L}$ of classical sentential calculi is {\it weakly countably distributive} if, for every double-indexed family $\{[F_{m,n}]:m,n\in\omega\times\omega\}$ of equivalence classes of formulas of $\mathcal{L}$ such that $[F_{m,n}]\leq [F_{m,n+1}]$, the relevant joins and meets exist and
\begin{gather*}
\textstyle{\bigwedge_{m\in\omega}\bigvee_{n\in\omega}[F_{m,n}]=\bigvee_{\varphi\in\omega^\omega}\bigwedge_{m\in\omega}[F_{m,\varphi(m)}],}
\end{gather*}
where $\omega^\omega$ is the set of all mappings $\varphi$ of $\omega$ into $\omega$.
\end{definition}

\begin{theorem}[von Neumann \cite{VonNeumann15}]
{\it A Lindenbaum--Tarski algebra ${\mathfrak{S}}/{\,\equiv}$ of the language $\mathcal{L}$ of classical sentential calculi carries a strictly positive, countably additive probability measure ${\bf p}$ only if it satisfies the countable chain condition and is weakly countably distributive.}
\end{theorem}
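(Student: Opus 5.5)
Suppose ${\bf p}$ is a strictly positive, countably additive probability measure on ${\mathfrak{S}}/{\,\equiv}$. The two conditions are proved separately, each by reducing to elementary properties of ${\bf p}$: finite additivity with strict positivity for the countable chain condition, and countable additivity (continuity from below and above) with strict positivity for weak countable distributivity. By Theorem 2.2 the algebra is a $\sigma$-complete Boolean algebra, so all the countable joins and meets used below exist.

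\emph{Countable chain condition.} Let $\mathcal{D}$ be a disjoint family of equivalence classes, meaning pairwise meets equal $[F\wedge -F]$, the zero. We may assume the zero is not in $\mathcal{D}$. For each $k\geq 1$ put $\mathcal{D}_k=\{a\in\mathcal{D}:{\bf p}(a)>1/k\}$. If $\mathcal{D}_k$ had $k$ distinct elements $a_1,\dots,a_k$, finite additivity would give ${\bf p}(a_1\vee\cdots\vee a_k)>1$, which is impossible. So each $\mathcal{D}_k$ has fewer than $k$ elements. By strict positivity every nonzero $a$ has ${\bf p}(a)>0$, hence lies in some $\mathcal{D}_k$. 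Therefore $\mathcal{D}=\bigcup_k\mathcal{D}_k$ is a countable union of finite sets and is countable.

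\emph{Weak countable distributivity.} Let $[F_{m,n}]$ be increasing in $n$, and write $a_{m,n}=[F_{m,n}]$, $a_m=\bigvee_n a_{m,n}$. The inequality
\[
\textstyle{\bigvee_{\varphi}\bigwedge_m a_{m,\varphi(m)}\leq\bigwedge_m a_m}
\]
always holds, since each $\bigwedge_m a_{m,\varphi(m)}$ lies below every $a_m$.

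The left side is a join over $\omega^\omega$, an uncountable index set, and $\sigma$-completeness alone does not guarantee it exists. This is the main obstacle: we must show the join exists and equals $b=\bigwedge_m a_m$. It suffices to show that $b$ is the least upper bound. Suppose $c$ is an upper bound of all the elements $\bigwedge_m a_{m,\varphi(m)}$, and let $d=b\wedge -c$. We show $d=0$, which gives $b\leq c$.

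Fix $\varepsilon>0$. By countable additivity, ${\bf p}(a_{m,n})\uparrow{\bf p}(a_m)$ as $n\to\infty$. The step that makes this work is that ${\bf p}(d\wedge a_{m,n})\uparrow{\bf p}(d\wedge a_m)={\bf p}(d)$, because $d\leq a_m$. So for each $m$ we can choose $\varphi(m)$ with
\[
\textstyle{{\bf p}(d\wedge -a_{m,\varphi(m)})<\varepsilon 2^{-m-1}.}
\]
Then, using countable subadditivity and the identity $-\bigwedge_m x_m=\bigvee_m -x_m$ in the Boolean algebra,
\[
\textstyle{{\bf p}\bigl(d\wedge\bigwedge_m a_{m,\varphi(m)}\bigr)\geq{\bf p}(d)-\sum_m\varepsilon 2^{-m-1}={\bf p}(d)-\varepsilon.}
\]
But $\bigwedge_m a_{m,\varphi(m)}\leq c$ and $d\leq -c$, so the meet on the left is zero. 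Hence ${\bf p}(d)\leq\varepsilon$ for every $\varepsilon>0$, so ${\bf p}(d)=0$, and strict positivity gives $d=0$. Thus $b\leq c$, so $b$ is the least upper bound, and the join exists and equals $\bigwedge_m\bigvee_n a_{m,n}$, which is the required identity.
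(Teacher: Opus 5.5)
Your proof is correct. The paper gives no proof of this statement; it is simply quoted from von Neumann's Scottish Book problem, so there is no internal argument to compare with. What you give is the standard proof: the pigeonhole count on $\{a:{\bf p}(a)>1/k\}$ for the countable chain condition, and the $\varepsilon 2^{-m-1}$ choice of $\varphi$ followed by countable subadditivity for weak countable distributivity.

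The steps you leave implicit are routine:
\begin{itemize}
\item The identities $d\wedge\bigvee_n a_{m,n}=\bigvee_n(d\wedge a_{m,n})$ and $d\wedge\bigvee_m -a_{m,\varphi(m)}=\bigvee_m(d\wedge -a_{m,\varphi(m)})$. These hold in any Boolean algebra whenever the join on the left exists.
\item Continuity from below of ${\bf p}$, which follows from countable additivity by disjointifying the increasing sequence.
\end{itemize}

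You show directly that $\bigwedge_m a_m$ is the least upper bound, which establishes that the join over $\omega^\omega$ exists. The route suggested by the paper's later remark (a ccc $\sigma$-complete Boolean algebra is complete) would first need that lemma before running the same estimate, so your version is more self-contained.

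Your argument uses nothing about ${\mathfrak{S}}/{\,\equiv}$ beyond the $\sigma$-completeness of Theorem 2.2. It therefore proves the result for every $\sigma$-complete Boolean algebra carrying a strictly positive, countably additive probability.
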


A Lindenbaum--Tarski algebra ${\mathfrak{S}}/{\,\equiv}$ of the language $\mathcal{L}$ of classical sentential calculi which is both ccc and weakly countably distributive is called {\it almost measurable}. Almost measurable Lindenbaum--Tarski algebras are denoted by ${\mathfrak{S}^\ast}/{\,\equiv}$. Of course, since every almost measurable Lindenbaum--Tarski algebra is ccc, it is complete, hence not free, no infinite free complete Boolean algebra existing in Zermelo--Fraenkel set theory with the axiom of choice \cite{Gaifman64,Hales64}.

Note also that, since every free $\sigma$-complete Boolean algebra is isomorphic to the $\sigma$-field of Baire subsets of a Cantor space \cite{Rieger51}, ${\mathfrak{S}}/{\,\equiv}$ is countably distributive, hence weakly countably distributive. The countable chain condition and the property of weak countable distributivity, however, because of their necessary character for the existence of a strictly positive, countably additive measure on a $\sigma$-complete Boolean algebra, are considered here indissociable.

\begin{theorem}[Stone \cite{Stone37a}]
{\it Let ${\mathfrak{S}^\ast}/{\,\equiv}$ be an almost measurable Lindenbaum--Tarski algebra of the language $\mathcal{L}$ of classical sentential calculi, $Z$ be a set of maximal filters of ${\mathfrak{S}^\ast}/{\,\equiv}$, and $\gamma$ be a Boolean homomorphism of ${\mathfrak{S}^\ast}/{\,\equiv}$ into $\mathcal{P}(Z)$ such that, for every equivalence class $[F]$ of formulas of $\mathcal{L}$, $\gamma([F])=\{\mathbf{F}\in Z:[F]\in\mathbf{F}\}$. Then the set $Z$ equipped with the topology for which the $\gamma([F])$'s form an open base is a zero-dimensional compact Hausdorff space.}
\end{theorem}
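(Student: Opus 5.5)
The plan is the classical Stone argument, carried out for the Boolean algebra ${\mathfrak{S}^\ast}/{\,\equiv}$. Countable completeness, the countable chain condition and weak countable distributivity play no role; only the finitary Boolean structure matters. I read $Z$ as the set of \emph{all} maximal filters of ${\mathfrak{S}^\ast}/{\,\equiv}$. For an arbitrary subfamily the map $\gamma$ is still a homomorphism and the space is still zero-dimensional and Hausdorff, but compactness can fail, so this reading is where the hypothesis is really used.

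First I verify that $\gamma$ is a Boolean homomorphism into $\mathcal{P}(Z)$, using only that each $\mathbf{F}\in Z$ is a maximal filter, i.e.\ an ultrafilter. Being a filter gives $[F_1]\wedge[F_2]\in\mathbf{F}$ if and only if $[F_1]\in\mathbf{F}$ and $[F_2]\in\mathbf{F}$. Maximality gives that exactly one of $[F]$ and $-[F]$ lies in $\mathbf{F}$, and also that $[F_1]\vee[F_2]\in\mathbf{F}$ if and only if one of them lies in $\mathbf{F}$. Hence
\begin{gather*}
\gamma([F_1]\wedge[F_2])=\gamma([F_1])\cap\gamma([F_2]),\qquad \gamma(-[F])=Z\setminus\gamma([F]),
\end{gather*}
and $\gamma$ of the unit is $Z$ while $\gamma$ of the zero is $\emptyset$. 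It follows that the sets $\gamma([F])$ cover $Z$ and are closed under finite intersections, so they form a base for a topology on $Z$. Because the complement of each $\gamma([F])$ is again a basic set $\gamma(-[F])$, every basic open set is clopen; this gives zero-dimensionality.

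Hausdorffness is immediate. If $\mathbf{F}_1\neq\mathbf{F}_2$, pick $[F]\in\mathbf{F}_1\setminus\mathbf{F}_2$. By maximality $-[F]\in\mathbf{F}_2$, so $\gamma([F])$ and $\gamma(-[F])$ are disjoint open neighbourhoods of $\mathbf{F}_1$ and $\mathbf{F}_2$ respectively.

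The main step is compactness. Since the basic sets are clopen and closed under complement, it suffices to show that every family $\{\gamma([F_i]):i\in I\}$ of basic sets with the finite intersection property has nonempty intersection. The finite intersection property together with injectivity-free reasoning gives $\bigwedge_{i\in J}[F_i]\neq 0$ for every finite $J\subseteq I$, since otherwise $\gamma$ of that meet, which equals $\bigcap_{i\in J}\gamma([F_i])$, would be empty. Hence the $[F_i]$ generate a proper filter. By Zorn's lemma (the axiom of choice being assumed throughout, as the remark on \cite{Gaifman64,Hales64} indicates) this filter extends to a maximal filter $\mathbf{F}$. Then $\mathbf{F}\in Z$ precisely because $Z$ contains all maximal filters, and $\mathbf{F}\in\bigcap_i\gamma([F_i])$. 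Reducing an arbitrary open cover to a basic one and passing to complements finishes compactness. This is the one place where the ultrafilter extension, and the requirement that $Z$ be the full set of maximal filters, is essential.
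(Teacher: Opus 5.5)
Your proof is correct and is the classical Stone argument: ultrafilters make $\gamma$ a homomorphism, the basic sets are clopen because $Z\setminus\gamma([F])=\gamma(-[F])$, distinct points are separated by some $[F]$ versus $-[F]$, and compactness follows by extending a proper filter to a maximal one. This is all the paper relies on, since it gives no proof of its own and only cites Stone; your remark that compactness requires reading $Z$ as the set of \emph{all} maximal filters (whereas the homomorphism property, zero-dimensionality and the Hausdorff property hold for any subfamily) is a correct and worthwhile clarification of the statement's wording.
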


The zero-dimensional compact Hausdorff space $Z$ of Theorem 2.6 is known as the {\it Stone space of ${\mathfrak{S}^\ast}/{\,\equiv}$}. The next two theorems characterise more precisely the Stone space of almost measurable Lindenbaum--Tarski algebras. Theorem 2.7 is an immediate consequence of the countable chain condition; Theorem 2.9, for its part, follows from both the countable chain condition and the weak countable distributivity of ${\mathfrak{S}^\ast}/{\,\equiv}$.

\begin{theorem}[Stone \cite{Stone37b}]
{\it The closure of every open set in the Stone space $Z$ of an almost measurable Lindenbaum--Tarski algebra ${\mathfrak{S}^\ast}/{\,\equiv}$ of the language $\mathcal{L}$ of classical sentential calculi is open.}
\end{theorem}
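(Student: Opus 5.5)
The plan is to show that the Stone space $Z$ is extremally disconnected: every open set $U\subseteq Z$ has open closure. This comes from the completeness of ${\mathfrak{S}^\ast}/{\,\equiv}$, which in turn comes from the countable chain condition together with $\sigma$-completeness (Theorem 2.2).

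\textbf{Step 1: completeness.} We show ${\mathfrak{S}^\ast}/{\,\equiv}$ is complete, using the ccc.
\begin{enumerate}
\item Let $A$ be any family of equivalence classes. By Zorn's lemma, choose a maximal disjoint family $D$ of nonzero classes, each lying below some member of $A$.
\item By the countable chain condition, $D$ is countable, so $d=\bigvee D$ exists by $\sigma$-completeness.
\item Claim: $d$ is the least upper bound of $A$.
\begin{enumerate}
\item $d$ is an upper bound. If some $a\in A$ had $a\wedge -d\neq 0$, then $a\wedge -d$ is a nonzero class below $a$ and disjoint from every member of $D$. Adding it to $D$ contradicts maximality.
\item $d$ is least. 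Any upper bound of $A$ lies above every member of $D$, hence above $d$.
\end{enumerate}
\end{enumerate}
Weak countable distributivity is not needed for this statement.

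\textbf{Step 2: the closure of an open set is basic.} Let $U\subseteq Z$ be open.
\begin{enumerate}
\item By Theorem 2.6, write $U=\bigcup_{[F]\in A}\gamma([F])$, where $A=\{[F]:\gamma([F])\subseteq U\}$.
\item Let $b=\bigvee A$, which exists by Step 1.
\item We show $\overline{U}=\gamma(b)$. Since $\gamma(b)$ is clopen, open sets of this form suffice.
\item Inclusion $\overline{U}\subseteq\gamma(b)$: each $\gamma([F])\subseteq\gamma(b)$ for $[F]\in A$, so $U\subseteq\gamma(b)$. Since $\gamma(b)$ is closed (its complement is $\gamma(-b)$), the closure also lies in $\gamma(b)$.
\item Inclusion $\gamma(b)\subseteq\overline{U}$: suppose $\mathbf{F}\in\gamma(b)\setminus\overline{U}$. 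Then there is a basic neighbourhood $\gamma([G])\ni\mathbf{F}$ disjoint from $U$, and we may shrink it so that $\gamma([G])\subseteq\gamma(b)$, i.e. $[G]\le b$.
\begin{enumerate}
\item For each $[F]\in A$, $\gamma([G]\wedge[F])=\emptyset$. Since $\gamma$ is injective (Stone), $[G]\wedge[F]=0$.
\item Hence $-[G]$ is an upper bound of $A$, so $b\le -[G]$.
\item Combined with $[G]\le b$, this gives $[G]=0$, contradicting $\mathbf{F}\in\gamma([G])$.
\end{enumerate}
\end{enumerate}
Therefore $\overline{U}=\gamma(b)$ is open.

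\textbf{Main obstacle.} The delicate point is that the statement's $Z$ is ``a set of maximal filters''. The argument needs $Z$ to be the full Stone space, so that $\gamma$ is injective and nonzero classes have nonempty images. I would make this explicit by taking $Z$ to be all maximal filters, as the surrounding text indicates. The other point needing care is the maximality argument in Step 1, which is where the ccc is essential.
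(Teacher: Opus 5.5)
Your proposal is correct and takes the same route as the paper, which gives no proof beyond citing Stone and remarking that Theorem 2.7 is an immediate consequence of the countable chain condition: ccc plus $\sigma$-completeness yields completeness, and the Stone space of a complete algebra is extremally disconnected. Your Steps 1 and 2 supply exactly these two facts, and your insistence that $Z$ be the full set of maximal filters, so that $\gamma$ is injective, is the correct reading of Theorem 2.6.
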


\begin{definition}
In every topological space, a set is\\
(i) {\it nowhere dense} if the interior of its closure is empty;\\
(ii) {\it meagre} if it is a countable union of nowhere dense sets.
\end{definition}

\begin{theorem}[Kelley \cite{Kelley59}]
{\it Every meagre set in the Stone space $Z$ of an almost measurable Lindenbaum--Tarski algebra ${\mathfrak{S}^\ast}/{\,\equiv}$ of the language $\mathcal{L}$ of classical sentential calculi is nowhere dense.}
\end{theorem}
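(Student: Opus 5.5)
It suffices to show that a countable union of nowhere dense sets in $Z$ is nowhere dense. Since the closure of a nowhere dense set is nowhere dense, I will assume each of the given sets $N_0,N_1,\ldots$ is closed nowhere dense. Write $M=\bigcup_{m}N_m$. I must show that $\overline{M}$ has empty interior. Suppose not. Then by Theorem 2.6 there is a nonzero class $[G]$ with $\gamma([G])\subseteq\overline{M}$, and I will derive a contradiction.

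First I translate into the algebra. Each $Z\setminus N_m$ is open and dense. By Theorem 2.6 it is a union of basic clopen sets $\gamma([F])$. The classes $[F]$ with $\gamma([F])\cap N_m=\emptyset$ form a dense ideal. By ccc (Definition 2.3) and Zorn's lemma, this ideal contains a maximal disjoint family, and that family is countable; call it $\{[F_{m,n}]:n\in\omega\}$. Maximality and density give $\bigvee_n[F_{m,n}]=1$ in ${\mathfrak{S}^\ast}/{\,\equiv}$, where the join exists by completeness. Replacing $[F_{m,n}]$ by the finite joins $\bigvee_{k\le n}[F_{m,k}]$ makes the family increasing in $n$, and still $\gamma([F_{m,n}])\cap N_m=\emptyset$. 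Weak countable distributivity (Definition 2.4) then yields
\begin{gather*}
\textstyle{1=\bigwedge_{m}\bigvee_n[F_{m,n}]=\bigvee_{\varphi\in\omega^\omega}\bigwedge_m[F_{m,\varphi(m)}].}
\end{gather*}
Hence some $\varphi$ satisfies $[H]:=[G]\wedge\bigwedge_m[F_{m,\varphi(m)}]\neq 0$.

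The main obstacle is turning this nonzero infinite meet back into an open set disjoint from $M$, because $\gamma$ need not preserve countable meets. Here I use Theorem 2.7. The set $C=\bigcap_m\gamma([F_{m,\varphi(m)}])\cap\gamma([G])$ is closed. I claim its interior contains $\gamma([H])$. Indeed $[H]\le[F_{m,\varphi(m)}]$ and $[H]\le[G]$ in the algebra. Since $\gamma$ is a Boolean homomorphism, it is order preserving, so $\gamma([H])\subseteq\gamma([F_{m,\varphi(m)}])$ for every $m$ and $\gamma([H])\subseteq\gamma([G])$. Thus $\gamma([H])\subseteq C$.

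Then $\gamma([H])$ is a nonempty open set, because $[H]\neq0$ and the filters are maximal, so Stone's representation is injective. It is contained in $\gamma([G])\subseteq\overline{M}$. On the other hand, it is disjoint from each $N_m$, hence from $M$. An open set that is disjoint from $M$ is disjoint from $\overline{M}$. This contradicts $\emptyset\neq\gamma([H])\subseteq\overline{M}$, so $\overline{M}$ has empty interior.

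Since the ordering argument already closes the gap, Theorem 2.7 is needed only if one prefers to phrase the reduction to regular open sets. I expect the one delicate point in the write-up to be justifying that the maximal disjoint family has join $1$. That follows because any nonzero class disjoint from the family meets some element of the dense ideal, contradicting maximality.
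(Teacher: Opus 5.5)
Your proof is correct and complete. The paper gives no proof of its own: it cites Kelley and remarks that the result follows from the countable chain condition together with weak countable distributivity. Your argument is the standard derivation along exactly those lines. The countable chain condition converts each dense open set $Z\setminus N_m$ into a countable increasing family $[F_{m,n}]$ with join $1$ and clopen images missing $N_m$. Weak countable distributivity then gives a nonzero $[H]\le[G]\wedge[F_{m,\varphi(m)}]$ for all $m$, and $\gamma([H])$ is a nonempty open subset of $\overline{M}$ disjoint from $M$.

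Two cosmetic points. Injectivity of $\gamma$ comes from $Z$ being the set of all maximal filters, not from the filters being maximal. And, as you yourself observe, Theorem 2.7 plays no role, so the set $C$ can be dropped.
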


A topological space in which the closure of every open set is open is said to be {\it extremally disconnected}. An extremally disconnected compact Hausdorff space is also called a {\it Stonean space}. Stonean spaces are the Stone spaces of complete Boolean algebras; Stonean spaces in which meagre sets are nowhere dense are the Stone spaces of almost measurable Boolean algebras.

\begin{definition}
A {\it Baire set} in $Z$ is an element of the least $\sigma$-field of sets containing the open-closed subsets of $Z$.
\end{definition}

The next theorem is an instance of the Loomis--Sikorski theorem.

\begin{theorem}[Loomis \cite{Loomis47}, Sikorski \cite{Sikorski48}]
{\it Every almost measurable Linden-\linebreak baum--Tarski algebra ${\mathfrak{S}^\ast}/{\,\equiv}$ of the language $\mathcal{L}$ of classical sentential calculi is isomorphic to the quotient Boolean algebra $\mathcal{B}a(Z)/\mathcal{N}$, where $\mathcal{B}a(Z)$ is the $\sigma$-field of Baire subsets of the Stone space $Z$ of ${\mathfrak{S}^\ast}/{\,\equiv}$ and $\mathcal{N}\subseteq\mathcal{B}a(Z)$ is the $\sigma$-ideal of nowhere dense sets.}
\end{theorem}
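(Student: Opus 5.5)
We prove the Loomis--Sikorski representation by constructing a $\sigma$-homomorphism $h$ of $\mathcal{B}a(Z)$ onto ${\mathfrak{S}^\ast}/{\,\equiv}$ whose kernel is exactly $\mathcal{N}$. The quotient $\mathcal{B}a(Z)/\mathcal{N}$ is then isomorphic to ${\mathfrak{S}^\ast}/{\,\equiv}$ by the first isomorphism theorem for $\sigma$-complete Boolean algebras.

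The central step is to show that every Baire set $B\subseteq Z$ differs from a unique open-closed set by a meagre set. Let $\mathcal{M}$ be the family of Baire sets $B$ for which there is an open-closed $U$ with $B\,\triangle\,U$ meagre. We verify that $\mathcal{M}$ is a $\sigma$-field containing the open-closed sets; since $\mathcal{B}a(Z)$ is the least such $\sigma$-field, it follows that $\mathcal{M}=\mathcal{B}a(Z)$.

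1. Open-closed sets lie in $\mathcal{M}$ trivially.
2. Closure under complements follows from $(Z\setminus B)\triangle(Z\setminus U)=B\triangle U$.
3. For a countable union $\bigcup_n B_n$ with associated sets $U_n$, let $W=\bigcup_n U_n$, which is open, and put $U=\overline{W}$. By Theorem 2.7 (extremal disconnectedness), $U$ is open-closed.
4. We have $\overline{W}\setminus W$ nowhere dense, since it is closed with empty interior. Also $\bigl(\bigcup_n B_n\bigr)\triangle W\subseteq\bigcup_n(B_n\triangle U_n)$ is meagre.
5. Hence $\bigl(\bigcup_n B_n\bigr)\triangle U$ is meagre, so $\bigcup_n B_n\in\mathcal{M}$.

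Uniqueness of $U$ uses the Baire category theorem for compact Hausdorff spaces: if two open-closed sets differ by a meagre set, their symmetric difference is an open meagre set, hence empty. Alternatively, Theorem 2.9 makes that set open and nowhere dense, hence empty. By Theorem 2.9, every meagre set is nowhere dense, so the $\sigma$-ideal of meagre Baire sets coincides with $\mathcal{N}$.

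Next define $h(B)=\gamma^{-1}(U_B)$, where $U_B$ is the unique open-closed set attached to $B$ and $\gamma$ is the Stone isomorphism onto the open-closed sets (Theorem 2.6). We check that $h$ preserves complements and countable joins.

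1. Complements are preserved by step 2 above.
2. For countable joins, step 3 gives $U_{\bigcup B_n}=\overline{\bigcup_n U_{B_n}}$. This set is the supremum of the $U_{B_n}$ in the complete algebra of open-closed sets: any open-closed set containing every $U_{B_n}$ contains their union and hence its closure.
3. Since $\gamma$ is an isomorphism onto the open-closed sets, this supremum corresponds to $\bigvee_n[F_n]$ in ${\mathfrak{S}^\ast}/{\,\equiv}$.
4. Surjectivity holds because $h(\gamma([F]))=[F]$.
5. The kernel consists of the Baire sets with $U_B=\emptyset$, namely the meagre Baire sets, which equal $\mathcal{N}$.

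The main obstacle is the countable join in step 3 of the central argument. There the $\sigma$-join in ${\mathfrak{S}^\ast}/{\,\equiv}$ is not the set union but the closure of the union. The proof relies on Theorem 2.7 to make this closure clopen, and on Theorem 2.9 to identify the meagre ideal with $\mathcal{N}$, as the statement requires.
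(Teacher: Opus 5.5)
Your proof is correct. The paper gives no proof of this statement. It cites Loomis and Sikorski, and the passage from the meagre ideal of the general theorem to the ideal $\mathcal{N}$ of nowhere dense Baire sets rests implicitly on Theorem 2.9.

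Your argument is a self-contained version of the classical Loomis--Sikorski proof, and it shows where each hypothesis is used:
\begin{itemize}
\item Theorem 2.9 (where weak countable distributivity enters) is used only to identify the kernel with $\mathcal{N}$.
\item The Baire category theorem gives uniqueness of the clopen representative.
\end{itemize}

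Your one departure from the classical proof is in the countable-union step. You take $\overline{W}$, with $W=\bigcup_n U_n$, and invoke Theorem 2.7 to know it is open-closed. The classical argument instead writes $U_n=\gamma(a_n)$ and takes the clopen set $\gamma(\bigvee_n a_n)$. It then checks directly that $\gamma(\bigvee_n a_n)\setminus W$ is closed with empty interior: a nonempty clopen $\gamma(c)$ inside it would force $c=c\wedge\bigvee_n a_n=\bigvee_n(c\wedge a_n)=0$.

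The classical route needs only $\sigma$-completeness, so it works for every $\sigma$-complete algebra, with meagre sets in place of nowhere dense ones. Your route uses full completeness (via the countable chain condition and extremal disconnectedness), which is harmless here. It also makes preservation of countable joins by $h$ immediate: as you note, $\overline{W}$ is the supremum of the $U_n$ in the clopen algebra, so it coincides with $\gamma(\bigvee_n a_n)$ anyway.
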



\section{Theory of truth}

\begin{definition}
A {\it model for classical sentential calculi} is an ordered system $(\mathfrak{A},Z,\mathcal{F},\xi)$, where $\mathfrak{A}={\mathfrak{S}^\ast}/{\,\equiv}$ is an almost measurable Lindenbaum--Tarski algebra of the language $\mathcal{L}$ of classical sentential calculi, $Z$ is its Stone space, $\mathcal{F}=\{\emptyset,Z\}/\mathcal{N}$ is the trivial algebra of Baire subsets of $Z$ modulo nowhere dense sets, and $\xi$ is a Boolean $\sigma$-epimorphism of $\mathfrak{A}$ into $\mathcal{F}$ such that, for every equivalence class $[F]$ of formulas of $\mathcal{L}$,
\begin{gather*}
\tag{I}
\textstyle{{\rm int}~{\rm cl}~\xi([F])^\ast=\emptyset~\text{if and only if}~\xi([F])^\ast=\emptyset,}
\end{gather*}
the open-closed representative $\xi([F])^\ast$ of the image $\xi([F])$ of $[F]$ being nowhere dense in $Z$ (`false') if and only if it is the empty set.
\end{definition}

Schema (I) probably deserves an explanation. For if it is clear that the property of `nowhere denseness' rather applies to sets than equivalence classes of sets, that is to say, sets of sets, the reader may wonder why the representative $\xi([F])^\ast$ of $\xi([F])$ is open-closed at all. The point, of course, is that $\xi([F])^\ast$ must be uniquely determined for Schema (I) to be necessary and sufficient. And, by definition, each equivalence class $\xi([F])$ of Baire subsets of $Z$ has at most one open-closed representative because $\mathcal{F}$ is a subalgebra of $\mathcal{B}a(Z)/\mathcal{N}$.

\begin{definition}
In every topological space, a set is\\
(i) {\it non-boundary} if the closure of its interior is the space \cite{Wallace39};\\
(ii) {\it comeagre} if it is a countable intersection of non-boundary sets.
\end{definition}

\begin{theorem}
{\it Let $(\mathfrak{A},Z,\mathcal{F},\xi)$ be a model for classical sentential calculi. For every equivalence class $[F]$ of formulas of $\mathcal{L}$,
\begin{gather*}
\tag{II}
\textstyle{{\rm cl}~{\rm int}~\xi([F])^\ast=Z~\text{if and only if}~\xi([F])^\ast=Z,}
\end{gather*}
the open-closed representative $\xi([F])^\ast$ of the image $\xi([F])$ of $[F]$ being non-boundary in $Z$ $($`true'\,$)$ if and only if it is the space $Z$.}
\end{theorem}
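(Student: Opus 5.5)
Schema (II) is the dual of Schema (I), and I would obtain it by complementation rather than by any new topology. The one step that needs care is checking that complementation of the open-closed representative commutes with $\xi$; everything else is formal.

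\emph{Step 1: complements of representatives.} Since $\xi$ is a Boolean $\sigma$-epimorphism, $\xi([-F])=\xi(-[F])$ is the complement of $\xi([F])$ in $\mathcal{F}$. By the remark after Definition 3.1, each element of $\mathcal{F}$ has exactly one open-closed representative. If $\xi([F])^\ast$ is open-closed, so is its set complement $Z\setminus\xi([F])^\ast$, and this complement lies in the complementary class. By uniqueness,
\begin{gather*}
\xi([-F])^\ast=Z\setminus\xi([F])^\ast .
\end{gather*}
Here the representatives are taken to be $\emptyset$ or $Z$, since $\mathcal{F}=\{\emptyset,Z\}/\mathcal{N}$.

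\emph{Step 2: duality of the topological operators.} In any topological space, for every set $A$,
\begin{gather*}
Z\setminus{\rm cl}~{\rm int}~A={\rm int}~{\rm cl}~(Z\setminus A),
\end{gather*}
because ${\rm cl}(Z\setminus B)=Z\setminus{\rm int}~B$ and ${\rm int}(Z\setminus B)=Z\setminus{\rm cl}~B$. Hence ${\rm cl}~{\rm int}~A=Z$ if and only if ${\rm int}~{\rm cl}~(Z\setminus A)=\emptyset$.

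\emph{Step 3: apply Schema (I).} Put $A=\xi([F])^\ast$ and apply Schema (I) to the class $[-F]$. This gives the chain
\begin{gather*}
{\rm cl}~{\rm int}~\xi([F])^\ast=Z
\iff {\rm int}~{\rm cl}~\xi([-F])^\ast=\emptyset
\iff \xi([-F])^\ast=\emptyset
\iff \xi([F])^\ast=Z .
\end{gather*}
That is Schema (II).

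As a cross-check, the result can also be seen directly. An open-closed set $A$ satisfies ${\rm cl}~{\rm int}~A=A$, so the left side of Schema (II) reduces to $A=Z$ and the equivalence is immediate. I would still present the duality argument, so that Theorem 3.3 is visibly derived from Schema (I) as its dual, in the paper's spirit.
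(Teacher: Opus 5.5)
Your proof is correct and is essentially the paper's own argument. The paper also applies Schema (I) to $-[F]$ and dualises via $Z\setminus{\rm int}~{\rm cl}~S={\rm cl}~{\rm int}~(Z\setminus S)$, using $Z\setminus\xi(-[F])^\ast=\xi([F])^\ast$ tacitly in its last step, which you justify explicitly through uniqueness of the open-closed representative. Your closing remark is also right: ${\rm cl}~{\rm int}~A=A$ for every open-closed $A$, so Schema (II) holds outright, without any appeal to Schema (I).
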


\begin{proof}
Suppose that $\xi(-[F])^\ast$ is nowhere dense in $Z$ (`false') if and only if it is the empty set, that is to say, ${\rm int}~{\rm cl}~\xi(-[F])^\ast=\emptyset$ if and only if $\xi(-[F])^\ast=\emptyset$.\linebreak It follows from the rules of the topological calculus that
\begin{align*}
\textstyle{Z\setminus{\rm int}~{\rm cl}~\xi(-[F])^\ast}&=Z~\text{if and only if}~\textstyle{Z\setminus\xi(-[F])^\ast=Z}\\
\Longleftrightarrow\hspace{0.45cm}\textstyle{{\rm cl}~Z\setminus{\rm cl}~\xi(-[F])^\ast}&=Z~\text{if and only if}~\textstyle{Z\setminus\xi(-[F])^\ast=Z}\\
\Longleftrightarrow\hspace{0.25cm}\textstyle{{\rm cl}~{\rm int}~Z\setminus\xi(-[F])^\ast}&=Z~\text{if and only if}~\textstyle{Z\setminus\xi(-[F])^\ast=Z}\\
\Longleftrightarrow\hspace{1.3cm}\textstyle{{\rm cl}~{\rm int}~\xi([F])^\ast}&=Z~\text{if and only if}~\textstyle{\xi([F])^\ast=Z.}
\end{align*}
Hence, if a sentence is true if and only if its negation is false (principle of bivalence common to all two-valued calculi), then $\xi([F])^\ast$ is non-boundary in $Z$ (`true') if and only if it is the space $Z$ as claimed.
\end{proof}

A reader familiar with model theory probably recognised in Schema (II) the epitome of a T-schema and, more generally, Tarski's semantic conception of truth \cite{Tarski83}. Hence, Schema (II) is not a formula of $\mathcal{L}$ but a formula of a stronger language which contains $\mathcal{L}$, namely, that $\mathcal{L}^E$ of mathematical English viewed as an informal version of the first-order language of Zermelo--Fraenkel set theory with the axiom of choice. In particular, for every equivalence class $[F]$ of formulas of $\mathcal{L}$, $\xi([F])^\ast$ in the definiendum of Schema (II) is the name of $[F]$ to which a predicate applies, ${\rm cl}~{\rm int}~\cdot=Z$ is the predicate itself (here: {\it is true}), and $\xi([F])^\ast=Z$, the definiens, is the translation of $[F]$ into $\mathcal{L}^E$. In Tarski's phraseology, constructions of that kind are said to be {\it formally correct}.

\begin{theorem}
{\it Let $(\mathfrak{A},Z,\mathcal{F},\xi)$ be a model for classical sentential calculi. For every equivalence class $[F]$ of formulas of $\mathcal{L}$,
\begin{gather*}
\textstyle{{\rm cl}~{\rm int}~\xi(-[F])^\ast=Z~\text{if and only if}~{\rm int}~{\rm cl}~\xi([F])^\ast=\emptyset,}
\end{gather*}
the open-closed representative $\xi(-[F])^\ast$ of the image $\xi(-[F])$ of the complement $-[F]$ of $[F]$ being non-boundary in $Z$ if and only if the open-closed representative $\xi([F])^\ast$ of the image $\xi([F])$ of $[F]$ is nowhere dense in $Z$.}
\end{theorem}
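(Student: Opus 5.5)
The plan is to use the de Morgan-type duality between closure and interior, $Z\setminus{\rm cl}\,A={\rm int}\,(Z\setminus A)$ and $Z\setminus{\rm int}\,A={\rm cl}\,(Z\setminus A)$, together with the fact that $\xi$ is a Boolean homomorphism. The claim is in fact an identity between sets, ${\rm cl}~{\rm int}~\xi(-[F])^\ast = Z\setminus{\rm int}~{\rm cl}~\xi([F])^\ast$, and the biconditional then follows immediately. So no appeal to Schema (I) or to Theorem 3.3 should be needed. This mirrors the manipulations in the proof of Theorem 3.3, but here they are run at the level of sets rather than of biconditionals.

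\textbf{Step 1: complements of representatives.} I will first check that $\xi(-[F])^\ast=Z\setminus\xi([F])^\ast$. Since $\xi$ is a Boolean homomorphism, $\xi(-[F])=-\xi([F])$ in $\mathcal{F}\subseteq\mathcal{B}a(Z)/\mathcal{N}$. If $\xi([F])^\ast$ is an open-closed representative of $\xi([F])$, then $Z\setminus\xi([F])^\ast$ is open-closed and represents $-\xi([F])$. By the uniqueness of open-closed representatives noted after Definition 3.1, it is the representative $\xi(-[F])^\ast$. This is the step I expect to need the most care. In the present setting $\mathcal{F}=\{\emptyset,Z\}/\mathcal{N}$, so the representatives are just $\emptyset$ and $Z$. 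Uniqueness then holds because $Z$ is not nowhere dense in the nonempty compact Hausdorff space $Z$ (Baire category, or simply ${\rm int}\,{\rm cl}\,Z=Z$). Hence $\emptyset$ and $Z$ lie in different classes modulo $\mathcal{N}$.

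\textbf{Step 2: topological calculus.} Write $A=\xi([F])^\ast$. Then
\begin{gather*}
{\rm cl}~{\rm int}~(Z\setminus A)={\rm cl}~(Z\setminus{\rm cl}~A)=Z\setminus{\rm int}~{\rm cl}~A .
\end{gather*}
It follows that ${\rm cl}~{\rm int}~\xi(-[F])^\ast=Z$ if and only if $Z\setminus{\rm int}~{\rm cl}~A=Z$, that is, if and only if ${\rm int}~{\rm cl}~A=\emptyset$. This is exactly the assertion, read as: the representative of $-[F]$ is non-boundary if and only if the representative of $[F]$ is nowhere dense.

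Since $A$ is open-closed, both sides could even be reduced further, to $Z\setminus A=Z$ and $A=\emptyset$ respectively, but this is not required. I will remark at the end that the theorem thus holds for every open-closed set, independently of Schemata (I) and (II).
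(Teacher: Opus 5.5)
Your proposal is correct and follows the paper's proof. The paper also uses the Boolean homomorphism property to write $\xi(-[F])^\ast=Z\setminus\xi([F])^\ast$, then applies the identity ${\rm cl}~{\rm int}~(Z\setminus A)=Z\setminus{\rm int}~{\rm cl}~A$; your Step 1 just spells out the uniqueness of the open-closed representative, which the paper leaves implicit.
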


\begin{proof}
For ${\rm cl}~{\rm int}~\xi(-[F])^\ast={\rm cl}~{\rm int}~Z\setminus\xi([F])^\ast$ because $\xi$ is a Boolean homomorphism and ${\rm cl}~{\rm int}~Z\setminus\xi([F])^\ast=Z\setminus{\rm int}~{\rm cl}~\xi([F])^\ast$ by the rules of the top-\linebreak ological calculus from which the conclusion follows immediately.
\end{proof}

\begin{theorem}
{\it Let $(\mathfrak{A},Z,\mathcal{F},\xi)$ be a model for classical sentential calculi. For every equivalence class $[F_0],[F_1],[F_2],\ldots$ of formulas of $\mathcal{L}$,
\begin{gather*}
\hspace{-2cm}\textstyle{{\rm cl}~{\rm int}~\xi(\bigvee_{n\in\omega}[F_n])^\ast=Z~\text{if and only if}}\\
\hspace{2cm}\textstyle{{\rm cl}~{\rm int}~\xi([F_n])^\ast=Z~\text{for some}~n\in\omega,}
\end{gather*}
the open-closed representative $\xi(\bigvee_{n\in\omega}[F_n])^\ast$ of the image $\xi(\bigvee_{n\in\omega}[F_n])$ of the join $\bigvee_{n\in\omega}[F_n]$ being non-boundary in $Z$ if and only if at least one open-closed rep-\linebreak resentative $\xi([F_0])^\ast,\xi([F_1])^\ast,\xi([F_2])^\ast,\ldots$ of at least one image $\xi([F_0]),\xi([F_1]),\linebreak\xi([F_2]),\ldots$ of $[F_0],[F_1],[F_2],\ldots$ is non-boundary in $Z$.}
\end{theorem}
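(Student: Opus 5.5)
The plan is to exploit the fact that the target algebra $\mathcal{F}=\{\emptyset,Z\}/\mathcal{N}$ of a model for classical sentential calculi is the two-element Boolean algebra. Once everything is transported into $\mathcal{F}$, the statement becomes a triviality about countable joins in $\{0,1\}$. I would first use Theorem 3.3 (Schema (II)) to strip off the topological predicate on both sides. Applied to $\bigvee_{n\in\omega}[F_n]$ and to each $[F_n]$, it turns the claim into the equivalent statement
\begin{gather*}
\textstyle{\xi(\bigvee_{n\in\omega}[F_n])^\ast=Z~\text{if and only if}~\xi([F_n])^\ast=Z~\text{for some}~n\in\omega.}
\end{gather*}

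Next I would record what the open-closed representatives can be. Since $\mathcal{F}$ consists of exactly two classes, those of $\emptyset$ and of $Z$, and each class has at most one open-closed representative (as remarked after Definition 3.1), every $\xi([F])^\ast$ is either $\emptyset$ or $Z$. Moreover, $\xi([F])^\ast=Z$ exactly when $\xi([F])$ is the unit of $\mathcal{F}$. So the displayed equivalence is the purely algebraic statement that $\xi(\bigvee_n[F_n])=1$ in $\mathcal{F}$ if and only if $\xi([F_n])=1$ for some $n$.

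Then I would use that $\xi$ is a Boolean $\sigma$-homomorphism, so $\xi(\bigvee_n[F_n])=\bigvee_n\xi([F_n])$, the join being taken in $\mathcal{F}$.

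For the backward direction: if $\xi([F_k])=1$ for some $k$, then $1=\xi([F_k])\le\bigvee_n\xi([F_n])$, so the join is $1$.

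For the forward direction, argue contrapositively. If every $\xi([F_n])=0$, then $0$ is an upper bound of all the $\xi([F_n])$, so the least upper bound is $0$. Concretely, the union of the representatives is $\emptyset$, whose class is the zero of $\mathcal{F}$. Hence $\xi(\bigvee_n[F_n])^\ast=\emptyset\neq Z$.

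The only step needing care is the identification of the open-closed representative of the join. One must confirm that the countable join in $\mathcal{F}$, computed as a class of Baire sets modulo $\mathcal{N}$, is again one of the two classes with representatives $\emptyset$ or $Z$. In particular, a countable union of empty representatives must not produce a class with a different open-closed representative. This holds because $\mathcal{F}$ is a two-element subalgebra closed under the countable joins inherited from $\mathcal{B}a(Z)/\mathcal{N}$ (Theorem 2.11), and because $\emptyset\notin\mathcal{N}$-distinct from $Z$ by Schema (I). Granting this, the equivalence follows, and re-applying Theorem 3.3 restores the non-boundary formulation of the statement.
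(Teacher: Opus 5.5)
Your argument is correct, but it takes a different route from the paper. The paper proves the backward direction by monotonicity: $\xi([F_n])^\ast\subseteq\xi(\bigvee_{n\in\omega}[F_n])^\ast$, and every superset of a non-boundary set is non-boundary. It proves the forward direction by contradiction, working directly with the representatives:
\begin{itemize}
\item two-valuedness of $\mathcal{F}$ turns ``not non-boundary'' into ``nowhere dense'' for each $\xi([F_n])^\ast$;
\item Theorem 2.9 then makes $\bigcup_{n\in\omega}\xi([F_n])^\ast$ nowhere dense;
\item the nowhere density of $\bigcup_{n\in\omega}\xi([F_n])^\ast\triangle\,\xi(\bigvee_{n\in\omega}[F_n])^\ast$ carries this over to the representative of the join.
\end{itemize}
You instead remove the topological predicate from both sides with Schema (II), which is in any case immediate for clopen sets since ${\rm cl}~{\rm int}~U=U$. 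You then reduce everything to computing a countable join in the two-element algebra, using that $\xi$ preserves countable joins.

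Your route shows that only two things are used: two-valuedness of $\mathcal{F}$ and preservation of countable joins. In particular, Theorem 2.9 (and with it the ccc/weak-distributivity topology that the paper's subsequent remark calls essential) does no real work here. When every $\xi([F_n])^\ast$ is $\emptyset$, their union is literally $\emptyset$. The paper's route has a different advantage: it follows the same topological template it reuses for Theorem 4.16, where the target algebra $\mathcal{E}$ is not two-valued.

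One slip in your last paragraph: $\emptyset$ always belongs to $\mathcal{N}$. What you need is $Z\notin\mathcal{N}$, i.e.\ that the two classes of $\mathcal{F}$ are distinct. This holds because $Z$ is a non-empty open set and hence not nowhere dense, not because of Schema (I). The ``care'' step is also unnecessary: in a two-element algebra a join of zeros is zero, since zero is an upper bound.
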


\begin{proof}
Suppose that ${\rm cl}~{\rm int}~\xi([F_n])^\ast=Z$ for some $n$. Then ${\rm cl}~{\rm int}~\xi(\bigvee_{n\in\omega}[F_n])^\ast=Z$ because $\xi([F_n])^\ast\subseteq\bigvee_{n\in\omega}\xi([F_n])^\ast=\xi(\bigvee_{n\in\omega}[F_n])^\ast$ for every $n$ and every superset of a non-boundary set is non-boundary.

Conversely, suppose that ${\rm cl}~{\rm int}~\xi(\bigvee_{n\in\omega}[F_n])^\ast=Z$. It is shown that ${\rm cl}~{\rm int}~\xi([F_n])^\ast\linebreak=Z$ for some $n$. For suppose this is not the case. Then ${\rm int}~{\rm cl}~\xi([F_n])^\ast=\emptyset$ for every $n$, whence ${\rm int}~{\rm cl}~\bigcup_{n\in\omega}\xi([F_n])^\ast=\emptyset$ by Theorem 2.9, which implies that ${\rm int}~{\rm cl}~\xi(\bigvee_{n\in\omega}[F_n])^\ast=\emptyset$ because $\bigcup_{n\in\omega}\xi([F_n])^\ast\triangle\bigvee_{n\in\omega}\xi([F_n])^\ast$ is nowhere dense in $Z$. A contradiction.
\end{proof}

Note that the Stone topology generated by the countable chain condition and the weak countable distributivity of $\mathfrak{A}$ is essential in the proof of Theorem 3.5, a countable union of nowhere dense sets being not nowhere dense in general; in fact, it can even be (everywhere) dense, the set $\mathbb{Q}$ of rationals in $\mathbb{R}$ equipped with the usual topology being a classic example. Further, a set which is not non-boundary is not nowhere dense in general; in other words, that $\xi$ is a Boolean epimorphism of $\mathfrak{A}$ into $\mathcal{F}$ is equally essential to the proof of Theorem 3.5.

The next theorem is the contradual of Theorem 2.9.

\begin{theorem}
{\it Every comeagre set in the Stone space $Z$ of an almost measurable Lindenbaum--Tarski algebra $\mathfrak{A}$ of the language $\mathcal{L}$ of classical sentential calculi is non-boundary.}
\end{theorem}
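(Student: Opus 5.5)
The plan is to derive the statement from Theorem 2.9 by complementation, exactly as Schema (II) was obtained from Schema (I) in the proof of Theorem 3.3. Let $C=\bigcap_{n\in\omega}C_n$ be comeagre in $Z$, with each $C_n$ non-boundary, that is, ${\rm cl}~{\rm int}~C_n=Z$. We must show that ${\rm cl}~{\rm int}~C=Z$.

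First, we pass to complements. Put $D_n=Z\setminus C_n$. By the rules of the topological calculus,
\begin{gather*}
{\rm int}~{\rm cl}~D_n={\rm int}~{\rm cl}~(Z\setminus C_n)=Z\setminus{\rm cl}~{\rm int}~C_n=\emptyset,
\end{gather*}
using the identities ${\rm cl}(Z\setminus A)=Z\setminus{\rm int}~A$ and ${\rm int}(Z\setminus A)=Z\setminus{\rm cl}~A$. So each $D_n$ is nowhere dense, and by De Morgan's law $Z\setminus C=\bigcup_{n\in\omega}D_n$ is meagre in the sense of Definition 2.8.

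Second, we apply Theorem 2.9. Since $Z$ is the Stone space of the almost measurable algebra $\mathfrak{A}$, the meagre set $\bigcup_{n\in\omega}D_n$ is nowhere dense, so ${\rm int}~{\rm cl}~(Z\setminus C)=\emptyset$.

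Third, we complement once more. The same identities give
\begin{gather*}
{\rm cl}~{\rm int}~C=Z\setminus{\rm int}~{\rm cl}~(Z\setminus C)=Z,
\end{gather*}
so $C$ is non-boundary, as claimed.

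There is no real obstacle: all the substance, namely the ccc and weak countable distributivity of $\mathfrak{A}$, is already contained in Theorem 2.9. The only point needing care is to verify the duality identities ${\rm int}~{\rm cl}~(Z\setminus A)=Z\setminus{\rm cl}~{\rm int}~A$ and its dual. These hold in any topological space, so the argument is a purely formal dualisation of Theorem 2.9.
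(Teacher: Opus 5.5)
Your proof is correct and follows the paper's approach: the paper gives no separate argument and simply calls the statement the contradual of Theorem 2.9. Your complementation is exactly that dualisation, using ${\rm int}~{\rm cl}~(Z\setminus A)=Z\setminus{\rm cl}~{\rm int}~A$ and De Morgan's law to turn the non-boundary $C_n$ into nowhere dense $D_n$, applying Theorem 2.9 to their union, and complementing back.
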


\begin{theorem}
{\it Let $(\mathfrak{A},Z,\mathcal{F},\xi)$ be a model for classical sentential calculi. For every equivalence class $[F_0],[F_1],[F_2],\ldots$ of formulas of $\mathcal{L}$,
\begin{gather*}
\hspace{-2cm}\textstyle{{\rm cl}~{\rm int}~\xi(\bigwedge_{n\in\omega}[F_n])^\ast=Z~\text{if and only if}}\\
\hspace{2cm}\textstyle{{\rm cl}~{\rm int}~\xi([F_n])^\ast=Z~\text{for every}~n\in\omega,}
\end{gather*}
the open-closed representative $\xi(\bigwedge_{n\in\omega}[F_n])^\ast$ of the image $\xi(\bigwedge_{n\in\omega}[F_n])$ of the meet $\bigwedge_{n\in\omega}[F_n]$ being non-boundary in $Z$ if and only if all open-closed representatives $\xi([F_0])^\ast,\xi([F_1])^\ast,\xi([F_2])^\ast,\ldots$ of all images $\xi([F_0]),\xi([F_1]),\xi([F_2]),\ldots$ of $[F_0],[F_1],[F_2],\ldots$ are non-boundary in $Z$.}
\end{theorem}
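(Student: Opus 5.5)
The plan is to mirror the proof of Theorem 3.5, replacing nowhere dense sets by non-boundary sets and Theorem 2.9 by its contradual, Theorem 3.6.

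\emph{Forward direction.} Suppose ${\rm cl}~{\rm int}~\xi(\bigwedge_{n\in\omega}[F_n])^\ast=Z$. Since $\xi$ is a Boolean $\sigma$-homomorphism, we have $\bigwedge_{n\in\omega}[F_n]\leq[F_m]$ for every $m$, and hence $\xi(\bigwedge_{n\in\omega}[F_n])\leq\xi([F_m])$. Because the open-closed representatives are uniquely determined, this gives $\xi(\bigwedge_{n\in\omega}[F_n])^\ast\subseteq\xi([F_m])^\ast$ up to the canonical choice. Every superset of a non-boundary set is non-boundary, since ${\rm cl}~{\rm int}$ is monotone. Therefore ${\rm cl}~{\rm int}~\xi([F_m])^\ast=Z$ for every $m$.

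\emph{Converse.} Suppose ${\rm cl}~{\rm int}~\xi([F_n])^\ast=Z$ for every $n$. Then $\bigcap_{n\in\omega}\xi([F_n])^\ast$ is comeagre, so by Theorem 3.6 it is non-boundary, that is, ${\rm cl}~{\rm int}~\bigcap_{n\in\omega}\xi([F_n])^\ast=Z$. It remains to pass from the set-theoretic intersection to the open-closed representative $\xi(\bigwedge_{n\in\omega}[F_n])^\ast=\bigwedge_{n\in\omega}\xi([F_n])^\ast$. This representative is the Boolean meet in the algebra, namely the interior of the closure of the interior of the intersection, which is regular open and hence clopen by Theorem 2.7. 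The two sets differ by a meagre set, hence by Theorem 2.9 by a nowhere dense set $N$.

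\emph{Main obstacle.} The key step is to show that the non-boundary property is invariant under modification by a nowhere dense set. Write $A=\bigcap_{n\in\omega}\xi([F_n])^\ast$ and $B=\xi(\bigwedge_{n\in\omega}[F_n])^\ast$, with $A\triangle B\subseteq N$. Take complements: $Z\setminus A$ has empty interior of closure. Since $Z\setminus B\subseteq(Z\setminus A)\cup N$ and the union of two nowhere dense sets is nowhere dense, we get ${\rm int}~{\rm cl}~(Z\setminus B)=\emptyset$. By the identity used in Theorem 3.4, this is equivalent to ${\rm cl}~{\rm int}~B=Z$.

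\emph{Alternative.} As a cross-check, the whole converse can be obtained by duality from Theorem 3.5 applied to the joins $\bigvee_{n\in\omega}-[F_n]$, using Theorem 3.4 together with De Morgan's law in $\mathfrak{A}$. Note, however, that Theorem 3.5 has the form ``for some $n$'', so negating it requires the bivalence fact from the model that each $\xi([F])^\ast$ is either $\emptyset$ or $Z$ (Schemas (I) and (II)). In fact this observation gives a quicker route overall: since each representative is $\emptyset$ or $Z$, the meet's representative is $Z$ exactly when every $\xi([F_n])^\ast=Z$, and Schema (II) then translates this into the non-boundary statements.
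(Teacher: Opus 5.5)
Your proof is correct and follows the paper's route. The forward direction uses monotonicity of ${\rm cl}~{\rm int}$ under the inclusion $\xi(\bigwedge_{n\in\omega}[F_n])^\ast\subseteq\xi([F_m])^\ast$, and the converse applies Theorem 3.6 to the comeagre set $\bigcap_{n\in\omega}\xi([F_n])^\ast$. What you add is the explicit passage from that intersection to the clopen representative $\xi(\bigwedge_{n\in\omega}[F_n])^\ast$: non-boundariness is unchanged by a nowhere dense modification. The paper's proof leaves this step implicit, unlike its proof of Theorem 3.5. Your bivalence shortcut is also valid, since $\mathcal{F}$ is two-valued.
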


\begin{proof}
Suppose that ${\rm cl}~{\rm int}~\xi(\bigwedge_{n\in\omega}[F_n])^\ast=Z$. Then ${\rm cl}~{\rm int}~\xi([F])^\ast=Z$ for every $n$ because $\xi(\bigwedge_{n\in\omega}[F_n])^\ast=\bigwedge_{n\in\omega}\xi([F_n])^\ast\subseteq\xi([F_n])^\ast$ for every $n$ and every superset of a non-boundary set is non-boundary. The converse follows from Theorem 3.6, every comeagre subset of $Z$ being non-boundary.
\end{proof}

\begin{theorem}
{\it Let $(\mathfrak{A},Z,\mathcal{F},\xi)$ be a model for classical sentential calculi. For every equivalence class $[F_1],[F_2]$ of formulas of $\mathcal{L}$,
\begin{gather*}
\hspace{-2cm}\textstyle{{\rm cl}~{\rm int}~\xi([F_1]\rightarrow [F_2])^\ast=Z~\text{if and only if}}\\
\hspace{2cm}\textstyle{{\rm cl}~{\rm int}~\xi(-[F_1])^\ast=Z~\text{or}~{\rm cl}~{\rm int}~\xi([F_2])^\ast=Z,}
\end{gather*}
the open-closed representative $\xi([F_1]\rightarrow [F_2])^\ast$ of the image $\xi([F_1]\rightarrow [F_2])$ of the relative pseudo-complement $[F_1]\rightarrow [F_2]$ being non-boundary in $Z$ if and only if at least one open-closed representative $\xi(-[F_1])^\ast$ or $\xi([F_2])^\ast$ of at least one image $\xi(-[F_1])$ of $-[F_1]$ or $\xi([F_2])$ of $[F_2]$ is non-boundary in $Z$.}
\end{theorem}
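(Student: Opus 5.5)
The plan is to reduce the statement to Theorem 3.5 for a two-term join, after rewriting the relative pseudo-complement as a join. In a Boolean algebra the relative pseudo-complement satisfies $[F_1]\rightarrow[F_2]=-[F_1]\vee[F_2]$. Since $\xi$ is a Boolean homomorphism, this gives
\begin{gather*}
\textstyle{\xi([F_1]\rightarrow[F_2])=\xi(-[F_1])\vee\xi([F_2]).}
\end{gather*}
Taking the uniquely determined open-closed representatives (unique because $\mathcal{F}$ is a subalgebra of $\mathcal{B}a(Z)/\mathcal{N}$, as remarked after Definition 3.1), we get $\xi([F_1]\rightarrow[F_2])^\ast=\xi(-[F_1]\vee[F_2])^\ast$. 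It therefore suffices to show that ${\rm cl}~{\rm int}~\xi(-[F_1]\vee[F_2])^\ast=Z$ if and only if ${\rm cl}~{\rm int}~\xi(-[F_1])^\ast=Z$ or ${\rm cl}~{\rm int}~\xi([F_2])^\ast=Z$.

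This is the special case of Theorem 3.5 in which the countable family is $[G_0]=-[F_1]$ and $[G_n]=[F_2]$ for all $n\geq1$. The join $\bigvee_{n\in\omega}[G_n]$ then equals $-[F_1]\vee[F_2]$, and the condition ``for some $n\in\omega$'' reduces to the disjunction in the statement. I would invoke Theorem 3.5 directly in this form.

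For safety I would also check the two directions directly. The ``if'' direction holds because $\xi(-[F_1])^\ast$ and $\xi([F_2])^\ast$ are both contained in the representative of their join, and every superset of a non-boundary set is non-boundary. For the ``only if'' direction, suppose neither disjunct is non-boundary. Since $\xi$ lands in $\mathcal{F}$, each representative is either $\emptyset$ or $Z$; a representative that is not non-boundary is therefore empty, so by Schema (I) it is nowhere dense. The union of two nowhere dense sets is nowhere dense, which here is elementary and does not even need Theorem 2.9. Hence the representative of the join, which differs from this union by a nowhere dense set, is nowhere dense, and so it is not non-boundary. This is a contradiction.

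The main obstacle is justifying the identification of $\rightarrow$ with $-\,\cdot\vee\cdot$ at the level of $\xi$ and the representatives. Specifically, one must check that $\xi$, as a Boolean $\sigma$-epimorphism, preserves relative pseudo-complements. This holds because the relative pseudo-complement is definable from complement and join in any Boolean algebra, and $\xi$ preserves both.
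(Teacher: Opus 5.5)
Your proposal is correct and takes the same approach as the paper, whose entire proof is that the statement is an immediate consequence of Theorem 3.5. Your rewriting $[F_1]\rightarrow[F_2]=-[F_1]\vee[F_2]$ and your padding of the pair into a countable family simply make that reduction explicit, and your direct check, which uses that $\mathcal{F}=\{\emptyset,Z\}/\mathcal{N}$ forces every representative to be $\emptyset$ or $Z$, is a harmless extra confirmation that does not need Theorem 2.9.
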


\begin{proof}
As an immediate consequence of Theorem 3.5.
\end{proof}

\begin{definition}[Convention T \cite{Tarski83}]
A formally correct definition of the predicate {\it is true}, formulated in the metalanguage $\mathcal{L}^E$ of mathematical English, is {\it materially adequate} if it implies all instances of Schema (II).
\end{definition}

\begin{theorem}
{\it Let $(\mathfrak{A},Z,\mathcal{F},\xi)$ be a model for classical sentential calculi. The following schemas constitute a formally correct and materially adequate definition of the predicate \textnormal{is true:}
\begin{align*}
&\textstyle{{\rm cl}~{\rm int}~\xi([F])^\ast=Z~\text{if and only if}~\xi([F])^\ast=Z;}\\
&\textstyle{{\rm cl}~{\rm int}~\xi(-[F])^\ast=Z~\text{if and only if}~{\rm int}~{\rm cl}~\xi([F])^\ast=\emptyset;}\\
&\textstyle{{\rm cl}~{\rm int}~\xi(\bigvee_{n\in\omega}[F_n])^\ast=Z~\text{if and only if}~{\rm cl}~{\rm int}~\xi([F_n])^\ast=Z~\text{for some}~n;}\\
&\textstyle{{\rm cl}~{\rm int}~\xi(\bigwedge_{n\in\omega}[F_n])^\ast=Z~\text{if and only if}~{\rm cl}~{\rm int}~\xi([F_n])^\ast=Z~\text{for every}~n;}\\
&\textstyle{{\rm cl}~{\rm int}~\xi([F_1]\rightarrow [F_2])^\ast=Z~\text{if and only if}}\\
&\hspace{2cm}\textstyle{{\rm cl}~{\rm int}~\xi(-[F_1])^\ast=Z~\text{or}~{\rm cl}~{\rm int}~\xi([F_2])^\ast=Z,}
\end{align*}
the open-closed representative $\xi([F])^\ast$ of the image $\xi([F])$ of $[F]$ being non-boundary in $Z$ if and only if it is the space $Z$; or $\xi(-[F])^\ast$ is non-boundary in $Z$ if and only if $\xi([F])^\ast$ is nowhere dense in $Z$; or $\xi(\bigvee_{n\in\omega}[F_n])^\ast$ is non-boundary in $Z$ if and only if at least one $\xi([F_0])^\ast,\xi([F_1])^\ast,\xi([F_2])^\ast,\ldots$ is non-boundary in $Z$; or $\xi(\bigwedge_{n\in\omega}[F_n])^\ast$ is non-boundary in $Z$ if and only if all $\xi([F_0])^\ast,\xi([F_1])^\ast,\xi([F_2])^\ast,\ldots$ are non-boundary in $Z$; or $\xi([F_1]\rightarrow [F_2])^\ast$ is non-boundary in $Z$ if and only if at least one of $\xi(-[F_1])^\ast$ or $\xi([F_2])^\ast$ is non-boundary in $Z$.}
\end{theorem}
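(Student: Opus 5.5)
The plan is to treat the statement as a collection of results already established and to check the two requirements of Convention~T (Definition~3.9) in turn: formal correctness and material adequacy.

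\emph{Formal correctness.} Each of the five schemas is a sentence of the metalanguage $\mathcal{L}^E$, not of the object-language $\mathcal{L}$. In each schema:
\begin{enumerate}
\item[(a)] the definiendum applies the predicate ${\rm cl}~{\rm int}~\cdot=Z$ to the name $\xi(\cdot)^\ast$ of an equivalence class of formulas;
\item[(b)] the definiens is expressed either by a set-theoretic condition on the open-closed representative, or by the same predicate applied to the names of strictly simpler components (or by ${\rm int}~{\rm cl}~\cdot=\emptyset$ for the negation clause).
\end{enumerate}
The names $\xi([F])^\ast$ are well defined because $\mathcal{F}$ is a subalgebra of $\mathcal{B}a(Z)/\mathcal{N}$, so each class has at most one open-closed representative, as remarked after Definition~3.1. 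The recursion clauses follow the formation rules (ii)--(v) for $\mathcal{S}$. So, as in Tarski's construction, the definition is a recursive definition by induction on formulas inside ZFC. This is the formal-correctness part, and it is essentially a matter of inspection.

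\emph{Validity of each clause.} Next I verify that each schema actually holds in every model $(\mathfrak{A},Z,\mathcal{F},\xi)$.
\begin{enumerate}
\item[(1)] The first schema is Theorem~3.3, which was derived from Schema~(I) by complementation.
\item[(2)] The second schema is Theorem~3.4.
\item[(3)] The third schema is Theorem~3.5, resting on Kelley's Theorem~2.9 that meagre sets in $Z$ are nowhere dense.
\item[(4)] The fourth schema is Theorem~3.7, resting on its dual, Theorem~3.6.
\item[(5)] The fifth schema is Theorem~3.8. It reduces to the disjunction clause, since $[F_1]\rightarrow[F_2]=-[F_1]\vee[F_2]$ in the Boolean algebra $\mathfrak{A}$ and $\xi$ preserves this identity.
\end{enumerate}
All five schemas are therefore theorems about every model.

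\emph{Material adequacy.} It remains to show that these schemas jointly imply every instance of Schema~(II), i.e.\ for each $[F]$, ${\rm cl}~{\rm int}~\xi([F])^\ast=Z$ iff $\xi([F])^\ast=Z$. The first schema states exactly this, uniformly in $[F]$, so adequacy is immediate.

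If one wants adequacy to come from the recursive clauses rather than from the first schema alone, I would argue by induction on the construction of $F$:
\begin{enumerate}
\item[(i)] \emph{Variables.} Use the first schema.
\item[(ii)] \emph{Negation.} Combine the second schema with Schema~(I) to get that $\xi(-[F])^\ast$ is non-boundary iff $\xi([F])^\ast=\emptyset$, iff $\xi(-[F])^\ast=Z$.
\item[(iii)] \emph{Disjunction and conjunction.} Use the third and fourth schemas with the inductive hypothesis. Here one needs that, since $\xi$ maps into the two-element algebra $\mathcal{F}$, $\bigvee_n\xi([F_n])^\ast=Z$ iff some $\xi([F_n])^\ast=Z$, and $\bigwedge_n\xi([F_n])^\ast=Z$ iff all $\xi([F_n])^\ast=Z$.
\item[(iv)] \emph{Implication.} Follows from the negation and disjunction cases.
\end{enumerate}

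I expect the disjunction step of this induction to be the main obstacle. It requires the full strength of the almost measurability hypothesis, through Theorem~2.9, together with the fact that $\xi$ is an epimorphism onto the trivial algebra. Both of these were already packaged in Theorem~3.5, so the proof should reduce to citing it.
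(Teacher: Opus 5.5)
Your proposal is correct and matches the paper's proof, which consists solely of citing Theorems~3.3, 3.4, 3.5, 3.7, and 3.8 for the five clauses. Your observation that material adequacy is immediate, because the first clause is Schema~(II) itself, is what Convention~T requires; the formal-correctness discussion and the optional inductive re-derivation go beyond what the paper spells out and are not needed.
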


\begin{proof}
By Theorems 3.3, 3.4, 3.5, 3.7, and 3.8.
\end{proof}


\section{Modal extension}

\begin{definition}
A {\it model for modal sentential calculi} is an ordered system $(\mathfrak{A},Z,\mathcal{E},\zeta)$, where $\mathfrak{A}={\mathfrak{S}^\ast}/{\,\equiv}$ is an almost measurable Lindenbaum--Tarski algebra of the language $\mathcal{L}$ of classical sentential calculi, $Z$ is its Stone space, $\mathcal{E}=\mathcal{B}a(Z)/\mathcal{N}$ is the algebra of Baire subsets of $Z$ modulo nowhere dense sets, and $\zeta$ is a Boolean $\sigma$-isomorphism of $\mathfrak{A}$ into $\mathcal{E}$ such that, for every equivalence class $[F]$ of formulas of $\mathcal{L}$,
\begin{gather*}
\tag{III}
\textstyle{{\rm int}~{\rm cl}~\zeta([F])^\ast=\emptyset~\text{if and only if}~\zeta([F])^\ast=\emptyset,}
\end{gather*}
the open-closed representative $\zeta([F])^\ast$ of the image $\zeta([F])$ of $[F]$ being nowhere dense in $Z$ (`impossible') if and only if it is the empty set.
\end{definition}

That Schema (III) is, like Schema (I), a theorem of the metalanguage $\mathcal{L}^E$ of mathematical English if and only if $\zeta([F])^\ast$ is open-closed is clear. For suppose, to dispel any remaining doubts, that $\zeta([F])^\ast$ is closed. Then the sufficiency of Schema (III) certainly holds because the empty set is a closed, nowhere dense Baire subset of $Z$, whilst its necessity fails because the empty set is not the only subset of $Z$ which satisfies these properties, the homeomorphic copy of the Cantor set in $Z$ being a classic example.

\begin{theorem}
{\it Let $(\mathfrak{A},Z,\mathcal{E},\zeta)$ be a model for modal sentential calculi. For every equivalence class $[F]$ of formulas of $\mathcal{L}$,
\begin{gather*}
\tag{IV}
\textstyle{{\rm cl}~{\rm int}~\zeta([F])^\ast=Z~\text{if and only if}~\zeta([F])^\ast=Z,}
\end{gather*}
the open-closed representative $\zeta([F])^\ast$ of the image $\zeta([F])$ of $[F]$ being non-boundary in $Z$ $($`necessary'\,$)$ if and only if it is the space $Z$.}
\end{theorem}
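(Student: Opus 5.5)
The plan is to mirror the proof of Theorem 3.3 verbatim, replacing the trivial algebra $\mathcal{F}$ and the epimorphism $\xi$ by $\mathcal{E}=\mathcal{B}a(Z)/\mathcal{N}$ and the $\sigma$-isomorphism $\zeta$. I would apply Schema (III) not to $[F]$ but to its complement $-[F]$:
\begin{gather*}
\textstyle{{\rm int}~{\rm cl}~\zeta(-[F])^\ast=\emptyset~\text{if and only if}~\zeta(-[F])^\ast=\emptyset.}
\end{gather*}
Both sides are then complemented in $Z$, which preserves the biconditional. This gives $Z\setminus{\rm int}~{\rm cl}~\zeta(-[F])^\ast=Z$ if and only if $Z\setminus\zeta(-[F])^\ast=Z$.

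The key steps, in order, are as follows.

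First, I invoke the duality rule of the topological calculus, $Z\setminus{\rm int}~{\rm cl}~A={\rm cl}~{\rm int}~(Z\setminus A)$. It is obtained from $Z\setminus{\rm int}~B={\rm cl}~(Z\setminus B)$ together with $Z\setminus{\rm cl}~A={\rm int}~(Z\setminus A)$. This rewrites the left-hand side as ${\rm cl}~{\rm int}~(Z\setminus\zeta(-[F])^\ast)=Z$.

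Second, I identify $Z\setminus\zeta(-[F])^\ast$ with $\zeta([F])^\ast$. Since $\zeta$ is a Boolean homomorphism, $\zeta(-[F])=-\zeta([F])$ in $\mathcal{E}$. The set $Z\setminus\zeta([F])^\ast$ is open-closed whenever $\zeta([F])^\ast$ is, and it lies in the class $-\zeta([F])$. Each class of $\mathcal{E}$ has at most one open-closed representative, because two open-closed sets differing by a nowhere dense set are equal: their symmetric difference is open and nowhere dense, hence empty. It follows that $\zeta(-[F])^\ast=Z\setminus\zeta([F])^\ast$. Substituting this yields Schema (IV).

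The main obstacle is the second step, namely justifying that the open-closed representatives exist and are compatible with complementation. In the model for classical sentential calculi this was trivial, because $\mathcal{F}$ has only two elements, with representatives $\emptyset$ and $Z$. Here $\mathcal{E}$ is the full quotient, so I must argue that every class contains a clopen set. For this I would use Theorem 2.11 together with Stone's representation (Theorem 2.6): each element of $\mathcal{B}a(Z)/\mathcal{N}$ corresponds to some $[G]$ in $\mathfrak{A}$, and $\gamma([G])$ is a clopen representative of it. Uniqueness then follows from the argument in the second step. Once this is settled, the rest is the same formal manipulation as in Theorem 3.3, now read with `impossible' and `necessary' in place of `false' and `true'.
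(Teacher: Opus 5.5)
Your proposal is correct and follows the paper's proof essentially step for step. You apply Schema (III) to $-[F]$, complement both sides in $Z$, use ${\rm cl}~{\rm int}~(Z\setminus A)=Z\setminus{\rm int}~{\rm cl}~A$, and identify $Z\setminus\zeta(-[F])^\ast$ with $\zeta([F])^\ast$. Your extra justification that clopen representatives exist, are unique, and commute with complementation is a sound elaboration of what the paper leaves implicit in its notation $\zeta([F])^\ast$.
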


\begin{proof}
Suppose that $\zeta(-[F])^\ast$ is nowhere dense in $Z$ (`impossible') if and only if it is the empty set, that is to say, ${\rm int}~{\rm cl}~\zeta(-[F])^\ast=\emptyset$ if and only if $\zeta(-[F])^\ast=\emptyset$. It follows from the rules of the topological calculus that
\begin{align*}
\textstyle{Z\setminus{\rm int}~{\rm cl}~\zeta(-[F])^\ast}&=Z~\text{if and only if}~\textstyle{Z\setminus\zeta(-[F])^\ast=Z}\\
\Longleftrightarrow\hspace{0.45cm}\textstyle{{\rm cl}~Z\setminus{\rm cl}~\zeta(-[F])^\ast}&=Z~\text{if and only if}~\textstyle{Z\setminus\zeta(-[F])^\ast=Z}\\
\Longleftrightarrow\hspace{0.25cm}\textstyle{{\rm cl}~{\rm int}~Z\setminus\zeta(-[F])^\ast}&=Z~\text{if and only if}~\textstyle{Z\setminus\zeta(-[F])^\ast=Z}\\
\Longleftrightarrow\hspace{1.3cm}\textstyle{{\rm cl}~{\rm int}~\zeta([F])^\ast}&=Z~\text{if and only if}~\textstyle{\zeta([F])^\ast=Z.}
\end{align*}
Hence, if a sentence is necessary if and only if its negation is impossible (principle of duality common to all classical modal calculi), then $\zeta([F])^\ast$ is non-boundary in $Z$ (`necessary') if and only if it is the space $Z$ as claimed.
\end{proof}

\begin{definition}
In every topological space, a set is\\
(i) {\it somewhere dense} if and only if it is not nowhere dense;\\
(ii) {\it somewhere codense} if and only if it is not non-boundary.
\end{definition}

\begin{theorem}
{\it Let $(\mathfrak{A},Z,\mathcal{E},\zeta)$ be a model for modal sentential calculi. For every equivalence class $[F]$ of formulas of $\mathcal{L}$,
\begin{gather*}
\tag{V}
\textstyle{{\rm int}~{\rm cl}~\zeta([F])^\ast\not=\emptyset~\text{if and only if}~\zeta([F])^\ast\not=\emptyset,}
\end{gather*}
the open-closed representative $\zeta([F])^\ast$ of the image $\zeta([F])$ of $[F]$ being somewhere dense in $Z$ $($`possibly true'\,$)$ if and only if it is a non-empty set.}
\end{theorem}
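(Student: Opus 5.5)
Schema (V) is simply Schema (III) with both sides negated. The plan is therefore to derive it by contraposition, with no further topology needed.

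Fix an equivalence class $[F]$ of formulas of $\mathcal{L}$. By Definition 4.1, $\zeta$ is a Boolean $\sigma$-isomorphism of $\mathfrak{A}$ into $\mathcal{E}$ satisfying Schema (III):
\begin{gather*}
\textstyle{{\rm int}~{\rm cl}~\zeta([F])^\ast=\emptyset~\text{if and only if}~\zeta([F])^\ast=\emptyset.}
\end{gather*}
A biconditional $P\leftrightarrow Q$ in the metalanguage $\mathcal{L}^E$ is equivalent to $\neg P\leftrightarrow\neg Q$. Negating both sides of (III) therefore gives
\begin{gather*}
\textstyle{{\rm int}~{\rm cl}~\zeta([F])^\ast\not=\emptyset~\text{if and only if}~\zeta([F])^\ast\not=\emptyset,}
\end{gather*}
which is Schema (V). By Definition 4.3(i), the left-hand side says exactly that $\zeta([F])^\ast$ is somewhere dense in $Z$. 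This yields the verbal reading: `possibly true' if and only if non-empty.

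For a sanity check, and a direct argument independent of (III) as an axiom, I would also verify both directions topologically.

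\emph{Sufficiency.} If ${\rm int}~{\rm cl}~\zeta([F])^\ast\neq\emptyset$, then ${\rm cl}~\zeta([F])^\ast\neq\emptyset$, hence $\zeta([F])^\ast\neq\emptyset$.

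\emph{Necessity.} Suppose $\zeta([F])^\ast\neq\emptyset$. Since $\zeta([F])^\ast$ is open-closed, ${\rm int}~{\rm cl}~\zeta([F])^\ast=\zeta([F])^\ast\neq\emptyset$.

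The only potential obstacle is the one the paper already flags after Definition 4.1: the representative $\zeta([F])^\ast$ must be uniquely determined and open-closed. This holds because each class in $\mathcal{B}a(Z)/\mathcal{N}$ has at most one open-closed representative. By Theorem 2.10 and the Stone representation, classes in the image of $\zeta$ have one. Given this, the proof is a one-line contraposition of Schema (III).
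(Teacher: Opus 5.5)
Your proposal is correct and follows the paper's proof, which likewise obtains Schema (V) by negating both sides of Schema (III) and reading ${\rm int}~{\rm cl}~\zeta([F])^\ast\neq\emptyset$ as `somewhere dense' (hence `possibly true', i.e.\ not impossible). Your direct topological check goes slightly further and is also sound: because $\zeta([F])^\ast$ is open-closed, ${\rm int}~{\rm cl}~\zeta([F])^\ast=\zeta([F])^\ast$, so (V), and indeed (III), holds automatically for the clopen representative rather than having to be assumed.
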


\begin{proof}
Recall that ${\rm int}~{\rm cl}~\zeta([F])^\ast=\emptyset$ if and only if $\zeta([F])^\ast=\emptyset$. Then ${\rm int}~{\rm cl}~\zeta([F])^\ast\linebreak\not=\emptyset$ if and only if $\zeta([F])^\ast\not=\emptyset$. Hence, if a set is somewhere dense in $Z$ if and only if it is not nowhere dense and a sentence is possibly true if and only if it is not impossible, then $\zeta([F])^\ast$ is somewhere dense in $Z$ (`possibly true') if and only if it is a non-empty set as required.
\end{proof}

\begin{theorem}
{\it Let $(\mathfrak{A},Z,\mathcal{E},\zeta)$ be a model for modal sentential calculi. For every equivalence class $[F]$ of formulas of $\mathcal{L}$,
\begin{gather*}
\tag{VI}
\textstyle{{\rm cl}~{\rm int}~\zeta([F])^\ast\not=Z~\text{if and only if}~\zeta([F])^\ast\not=Z,}
\end{gather*}
the open-closed representative $\zeta([F])^\ast$ of the image $\zeta([F])$ of $[F]$ being somewhere codense in $Z$ $($`possibly false'\,$)$ if and only if it is not the space $Z$.}
\end{theorem}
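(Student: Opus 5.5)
The plan is to mirror the proof of Theorem 4.3, using Theorem 4.2 in place of Schema (III). By Theorem 4.2 (Schema (IV)), for every equivalence class $[F]$ of formulas of $\mathcal{L}$,
\begin{gather*}
\textstyle{{\rm cl}~{\rm int}~\zeta([F])^\ast=Z~\text{if and only if}~\zeta([F])^\ast=Z.}
\end{gather*}
Negating both sides of a biconditional preserves it, since $(P\leftrightarrow Q)$ is equivalent to $(\neg P\leftrightarrow\neg Q)$ in the metalanguage $\mathcal{L}^E$. This gives ${\rm cl}~{\rm int}~\zeta([F])^\ast\not=Z$ if and only if $\zeta([F])^\ast\not=Z$, which is Schema (VI).

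It then remains to attach the intended readings, exactly as in the proof of Theorem 4.3. By Definition 4.3(ii), a set is somewhere codense precisely when it is not non-boundary, i.e.\ when ${\rm cl}~{\rm int}$ of it differs from $Z$. Under the reading of Theorem 4.2, non-boundary means `necessary'. If a sentence is called possibly false precisely when it is not necessary, then $\zeta([F])^\ast$ is somewhere codense (`possibly false') if and only if it is not the whole space $Z$.

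As a cross-check, I would derive the same conclusion from Theorem 4.3 applied to $-[F]$. Because $\zeta$ is a Boolean homomorphism, $\zeta(-[F])^\ast=Z\setminus\zeta([F])^\ast$, so $\zeta(-[F])^\ast\not=\emptyset$ if and only if $\zeta([F])^\ast\not=Z$. The topological identity ${\rm int}~{\rm cl}~(Z\setminus A)=Z\setminus{\rm cl}~{\rm int}~A$ then translates ${\rm int}~{\rm cl}~\zeta(-[F])^\ast\not=\emptyset$ into ${\rm cl}~{\rm int}~\zeta([F])^\ast\not=Z$.

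There is no real obstacle. The only point needing care is that the open-closed representative is uniquely determined and commutes with complementation, and this was already settled in the discussion following Definitions 3.1 and 4.1.
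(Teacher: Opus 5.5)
Your proposal is correct and follows the paper's proof essentially verbatim: negate both sides of Schema (IV) from Theorem 4.2, then read `somewhere codense' as `not non-boundary' and `possibly false' as `not necessary'. Your cross-check, which applies Schema (V) to $-[F]$ and uses ${\rm int}~{\rm cl}~(Z\setminus A)=Z\setminus{\rm cl}~{\rm int}~A$, is also valid but not needed; note that Schema (V) is Theorem 4.4 in the paper, not 4.3, since 4.3 is the definition of somewhere dense and somewhere codense.
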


\begin{proof}
Recall that ${\rm cl}~{\rm int}~\zeta([F])^\ast=Z$ if and only if $\zeta([F])^\ast=Z$. Then ${\rm cl}~{\rm int}~\zeta([F])^\ast\linebreak\not=Z$ if and only if $\zeta([F])^\ast\not=Z$. Hence, if a set is somewhere codense in $Z$ if and only if it is not non-boundary and a sentence is possibly false if and only if it is not necessary, then $\zeta([F])^\ast$ is somewhere codense in $Z$ (`possibly false') if and only if it is not the space $Z$ as required.
\end{proof}

Schemas (III)--(VI) are noteworthy. For, contrary to the Lewis--G\"{o}del paradigm \cite{Godel86,Lewis59} currently operating in full force, no additional symbol to the language $\mathcal{L}$ of classical sentential calculi, that is to say, no additional operator on the Lindenbaum--Tarski algebra $\mathfrak{A}$, was required to capture some notion of necessity, possibility, and impossibility. This follows, of course, from the fact that the aforementioned modes of truth, in a Tarskian fashion, were defined at the level of the metalanguage $\mathcal{L}^E$ of mathematical English instead.

\begin{theorem}
{\it Let $(\mathfrak{A},Z,\mathcal{E},\zeta)$ be a model for modal sentential calculi. For every equivalence class $[F]$ of formulas of $\mathcal{L}$,
\begin{gather*}
\textstyle{{\rm cl}~{\rm int}~\xi([F])^\ast=Z~\text{whenever}~{\rm cl}~{\rm int}~\zeta([F])^\ast=Z,}
\end{gather*}
the open-closed representative $\xi([F])^\ast$ of the image $\xi([F])$ of $[F]$ being non-boundary in $Z$ whenever the open-closed representative $\zeta([F])^\ast$ of the image $\zeta([F])$ of $[F]$ is non-boundary in $Z$, but not conversely.}
\end{theorem}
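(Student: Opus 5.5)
The plan is to compare the two models through their common target structure. Both $\xi$ and $\zeta$ send $\mathfrak{A}$ into quotients of $\mathcal{B}a(Z)$ by $\mathcal{N}$: $\xi$ into the trivial subalgebra $\mathcal{F}=\{\emptyset,Z\}/\mathcal{N}$, and $\zeta$ into the full algebra $\mathcal{E}=\mathcal{B}a(Z)/\mathcal{N}$. The inclusion $\mathcal{F}\subseteq\mathcal{E}$ is therefore the basic tool.

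\textbf{Forward implication.} Assume ${\rm cl}~{\rm int}~\zeta([F])^\ast=Z$. By Schema (IV) (Theorem 4.2), this gives $\zeta([F])^\ast=Z$, so $\zeta([F])$ is the unit of $\mathcal{E}$. Since $\zeta$ is a Boolean $\sigma$-isomorphism, $[F]$ is the unit $\mathbf{1}$ of $\mathfrak{A}$. Since $\xi$ is a Boolean $\sigma$-epimorphism, it preserves the unit, so $\xi([F])$ is the class of $Z$ in $\mathcal{F}$. Its open-closed representative is then $\xi([F])^\ast=Z$; it is unique, as remarked after Definition 3.1. Finally, Schema (II) (Theorem 3.2) gives ${\rm cl}~{\rm int}~\xi([F])^\ast=Z$. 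So the direction ``whenever'' is essentially a unit-preservation argument, routed through the two T-schemas.

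\textbf{Failure of the converse.} I need some model and some $[F]$ with $\xi([F])^\ast=Z$ but $\zeta([F])^\ast\neq Z$. The natural choice is $\xi$ given by a maximal $\sigma$-filter, i.e.\ a two-valued $\sigma$-homomorphism, which exists when $\mathfrak{A}$ is nontrivial. Pick any $[F]$ in that filter with $[F]\neq\mathbf{1}$; this works as soon as $\mathfrak{A}$ has more than two elements, for instance with a single sentential variable $p$ and $[F]=[p]$. Then $\xi([F])^\ast=Z$, so ${\rm cl}~{\rm int}~\xi([F])^\ast=Z$ by Schema (II). On the other side, injectivity of $\zeta$ gives $\zeta([F])\neq\zeta(\mathbf{1})$, so $\zeta([F])^\ast\neq Z$, and Schema (VI) (Theorem 4.4) yields ${\rm cl}~{\rm int}~\zeta([F])^\ast\neq Z$. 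In the paper's terms, a sentence can be true without being necessary.

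\textbf{Main obstacle.} The delicate step is producing the counterexample inside the paper's framework. We need an almost measurable $\mathfrak{A}$ that has a $\sigma$-epimorphism onto the two-element algebra $\mathcal{F}$, not merely an ordinary homomorphism, together with an element of the resulting $\sigma$-filter other than $\mathbf{1}$. The first thing I would try is a $\mathfrak{A}$ that is a measure algebra of a space with an atom, such as a finite power set algebra, which is trivially ccc and weakly distributive. There principal ultrafilters give $\sigma$-homomorphisms, and a non-unit element containing the chosen atom serves as $[F]$. If atoms are disallowed, I would instead just record that the converse fails whenever such a $\xi$ exists, which the definition of a model presupposes.
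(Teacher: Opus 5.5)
Your proof is correct and follows the paper's route. The paper sets $\theta=\xi\circ\zeta^{-1}$ and uses $\theta(Z)=Z$, which is exactly your unit-preservation step through the injectivity of $\zeta$. It attributes the failure of the converse to the non-invertibility of $\theta$, and your choice of a non-unit $[F]$ in the ultrafilter $\xi^{-1}(\mathbf{1})$ makes that explicit.

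Your ``main obstacle'' is moot: $\xi$ is given in the statement, and any ultrafilter on an algebra with more than two elements contains a non-unit element. Your aside that a $\sigma$-ultrafilter exists whenever $\mathfrak{A}$ is nontrivial is false, since on a complete ccc algebra a $\sigma$-complete ultrafilter is principal at an atom, so atomless measure algebras have none; the fallback formulation you give is therefore the right one.
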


\begin{proof}
Recall that $\zeta$ is a Boolean $\sigma$-isomorphism of $\mathfrak{A}$ into $\mathcal{E}$ and that $\xi$ is a Boolean $\sigma$-epimorphism of $\mathfrak{A}$ into $\mathcal{F}\subseteq\mathcal{E}$. Set $\theta=\xi\circ\zeta^{-1}$. Then the composition $\theta\circ\zeta=(\xi\circ\zeta^{-1})\circ\zeta=\xi\circ(\zeta\circ\zeta^{-1})=\xi$ is a Boolean $\sigma$-epimorphism of $\mathfrak{A}$ into $\mathcal{F}$ because every Boolean isomorphism is a Boolean epimorphism.

Suppose now that ${\rm cl}~{\rm int}~\zeta([F])^\ast=Z$, that is to say, by Theorem 4.2, $\zeta([F])^\ast=Z$. Then $\xi([F])^\ast=(\theta\circ\zeta)([F])^\ast=\theta(\zeta([F])^\ast)=\theta(Z)=Z$, whence ${\rm cl}~{\rm int}~\xi([F])^\ast\linebreak=Z$ by Theorem 3.3 as required. That the converse fails in general follows from the fact that $\theta$, as a Boolean epimorphism, is not invertible.
\end{proof}

\begin{theorem}
{\it Let $(\mathfrak{A},Z,\mathcal{E},\zeta)$ be a model for modal sentential calculi. For every equivalence class $[F]$ of formulas of $\mathcal{L}$,
\begin{gather*}
\textstyle{{\rm int}~{\rm cl}~\zeta([F])^\ast\not=\emptyset~\text{whenever}~{\rm cl}~{\rm int}~\zeta([F])^\ast=Z,}
\end{gather*}
the open-closed representative $\zeta([F])^\ast$ of the image $\zeta([F])$ of $[F]$ being somewhere dense in $Z$ whenever the open-closed representative $\zeta([F])^\ast$ of the image $\zeta([F])$ of $[F]$ is non-boundary in $Z$, but not conversely.}
\end{theorem}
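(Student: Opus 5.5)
The plan is to pass through the set-theoretic translations that Theorems 4.2 and 4.4 provide, so the statement reduces to an elementary comparison between the sets $Z$ and $\emptyset$. Suppose ${\rm cl}~{\rm int}~\zeta([F])^\ast=Z$. By Schema (IV) of Theorem 4.2 this is equivalent to $\zeta([F])^\ast=Z$. Because $\mathfrak{A}$ is the Lindenbaum--Tarski algebra of a language with $\kappa\geq\omega$ sentential variables, it is nondegenerate. Its Stone space $Z$ is therefore non-empty, since Theorem 2.6 identifies $Z$ with a set of maximal filters and a nondegenerate Boolean algebra has at least one. Hence $\zeta([F])^\ast=Z\not=\emptyset$. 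Schema (V) of Theorem 4.4 then gives ${\rm int}~{\rm cl}~\zeta([F])^\ast\not=\emptyset$, so $\zeta([F])^\ast$ is somewhere dense. One can also argue directly, without (V): $\zeta([F])^\ast=Z$ is open-closed, so ${\rm int}~{\rm cl}~Z=Z\not=\emptyset$.

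For the failure of the converse, I would exhibit an equivalence class $[F]$ with $\emptyset\not=\zeta([F])^\ast\not=Z$. Take $F$ to be a sentential variable $p$. In the Lindenbaum--Tarski algebra, $[p]$ is neither the top nor the bottom element, because neither $p$ nor $-p$ is a tautology. Since $\zeta$ is a Boolean $\sigma$-isomorphism into $\mathcal{E}$, it is injective and preserves $0$ and $1$. So $\zeta([p])$ is neither the class of $\emptyset$ nor the class of $Z$, and its open-closed representative is therefore a proper non-empty clopen set. By (V) it is somewhere dense, and by (VI) of Theorem 4.5 it is somewhere codense, i.e.\ ${\rm cl}~{\rm int}~\zeta([p])^\ast\not=Z$. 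This gives the required counterexample.

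The main obstacle is the counterexample, specifically justifying that an open-closed representative $\zeta([p])^\ast$ exists and is uniquely determined. The forward direction is immediate from Theorems 4.2 and 4.4. For the counterexample, I would first try to use that $\mathfrak{A}$ is isomorphic, via Stone's $\gamma$ of Theorem 2.6, to the clopen algebra of $Z$. I would then take $\zeta([p])^\ast=\gamma([p])$, which by Theorem 2.10 is the unique clopen set in its class modulo $\mathcal{N}$: two distinct clopen sets have a non-empty clopen symmetric difference, which is not nowhere dense. Nontriviality of $\gamma([p])$ then follows from nontriviality of $[p]$.
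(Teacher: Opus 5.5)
Your proof is correct, but it is organised differently from the paper's.

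The paper never invokes Schema (IV). It keeps $U=\zeta([F])^\ast$ general and uses only that $U$ is open, computing ${\rm cl}~{\rm int}~U=Z \Longleftrightarrow {\rm int}~{\rm cl}~{\rm int}~U={\rm int}~Z \Longleftrightarrow {\rm int}~{\rm cl}~U=Z$. So it in fact shows that, for open sets, being non-boundary is equivalent to ${\rm int}~{\rm cl}~U=Z$. For the failure of the converse it only remarks that a somewhere dense set need not be non-boundary.

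You instead collapse the hypothesis to $U=Z$ via (IV), after which the claim is immediate. For a clopen $U$ this step is just ${\rm cl}~{\rm int}~U=U$, so Theorem 4.2 is not really needed. Your route buys explicitness in two places:
\begin{itemize}
\item you check $Z\neq\emptyset$, which the paper silently needs to pass from ${\rm int}~{\rm cl}~U=Z$ to ${\rm int}~{\rm cl}~U\neq\emptyset$;
\item you produce an actual witness $[p]$ for ``not conversely''. The paper's general topological remark does not by itself show that some clopen $\zeta([F])^\ast$ is somewhere dense but not non-boundary.
\end{itemize}

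Two caveats on that witness.
\begin{itemize}
\item ``Neither $p$ nor $-p$ is a tautology'' gives $[p]\neq 0,1$ in the free algebra $\mathfrak{S}/{\equiv}$. The paper stresses, however, that the almost measurable $\mathfrak{A}$ is not free. What you really use, as the paper tacitly does too, is that $\mathfrak{A}$ has an element other than $0$ and $1$.
\item The identification $\zeta([p])^\ast=\gamma([p])$ presumes that $\zeta$ is the canonical Loomis--Sikorski map. It is unnecessary anyway: your symmetric-difference argument gives uniqueness of the clopen representative in every class, and its existence is built into the definition of the model.
\end{itemize}
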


\begin{proof}
Indeed, it follows immediately from the rules of the topological calcu-\linebreak lus that ${\rm cl}~{\rm int}~\zeta([F])^\ast=Z$ if and only if ${\rm int}~{\rm cl}~{\rm int}~\zeta([F])^\ast={\rm int}~Z$ if and only if ${\rm int}~{\rm cl}~\zeta([F])^\ast=Z$ because $\zeta([F])^\ast$ is open(-closed) and ${\rm int}~Z=Z$. This implies that ${\rm int}~{\rm cl}~\zeta([F])^\ast\not=\emptyset$ as required. That the converse fails is obvious,\linebreak a somewhere dense set being not non-boundary in general.
\end{proof}

\begin{theorem}
{\it Let $(\mathfrak{A},Z,\mathcal{E},\zeta)$ be a model for modal sentential calculi. For every equivalence class $[F]$ of formulas of $\mathcal{L}$,
\begin{gather*}
\textstyle{{\rm int}~{\rm cl}~\zeta([F])^\ast\not=\emptyset~\text{whenever}~{\rm cl}~{\rm int}~\xi([F])^\ast=Z,}
\end{gather*}
the open-closed representative $\zeta([F])^\ast$ of the image $\zeta([F])$ of $[F]$ being somewhere dense in $Z$ whenever the open-closed representative $\xi([F])^\ast$ of the image $\xi([F])$ of $[F]$ is non-boundary in $Z$, but not conversely.}
\end{theorem}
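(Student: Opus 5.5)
The plan is to chain the earlier results through the intermediate statement that $\xi([F])^\ast=Z$, which the paper itself uses to link the two models. Suppose ${\rm cl}~{\rm int}~\xi([F])^\ast=Z$. By Theorem 3.3 (Schema (II)) this is equivalent to $\xi([F])^\ast=Z$. In particular $\xi([F])$ is the unit of $\mathcal{F}$, so $\xi([F])\not=\xi(\mathbf{0})$, where $\mathbf{0}$ is the zero of $\mathfrak{A}$. Since $\xi$ is a Boolean homomorphism, this forces $[F]\not=\mathbf{0}$ in $\mathfrak{A}$.

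Next I transfer this to $\zeta$. Since $\zeta$ is a Boolean $\sigma$-isomorphism, it is injective, so $[F]\not=\mathbf{0}$ gives $\zeta([F])\not=\zeta(\mathbf{0})$, the zero of $\mathcal{E}$. The open-closed representative of the zero class is $\emptyset$, and a class has at most one open-closed representative, so $\zeta([F])^\ast\not=\emptyset$. Schema (V) of Theorem 4.4 then yields ${\rm int}~{\rm cl}~\zeta([F])^\ast\not=\emptyset$.

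Alternatively, one could mimic the proof of Theorem 4.5 and write $\zeta=\theta^{-1}$-style relations with $\theta=\xi\circ\zeta^{-1}$. However, the direct route via $[F]\not=\mathbf{0}$ avoids the questionable inversion, so I would use that.

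For the failure of the converse, I need an $[F]$ with $\zeta([F])^\ast$ nonempty but $\xi([F])^\ast=\emptyset$, i.e.\ $\xi([F])$ equal to the zero of $\mathcal{F}$. Because $\mathcal{F}$ is two-element while $\mathfrak{A}$ is nontrivial (indeed infinite), the kernel of the epimorphism $\xi$ is a proper nonzero $\sigma$-ideal. So there is $[F]\not=\mathbf{0}$ with $\xi([F])=\mathbf{0}$. For such an $[F]$, ${\rm int}~{\rm cl}~\zeta([F])^\ast\not=\emptyset$ by the argument above, yet $\xi([F])^\ast=\emptyset$, hence ${\rm cl}~{\rm int}~\xi([F])^\ast=\emptyset\not=Z$.

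The main obstacle is this counterexample step: it requires that $\mathfrak{A}$ have more than two elements, so that $\xi$ is not injective. I would justify this from the supply of $\kappa\geq\omega$ sentential variables: for a variable $p$, $[p]$ and $-[p]$ are both nonzero, and $\xi$ sends exactly one of them to $\mathbf{0}$.
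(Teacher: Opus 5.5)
Your proof is correct and is essentially the paper's. Both arguments reduce the hypothesis to $\xi([F])^\ast=Z$ via Theorem 3.3, then use zero-preservation to get $\zeta([F])^\ast\neq\emptyset$ (the paper writes $\theta(\emptyset)=\emptyset$ for $\theta=\xi\circ\zeta^{-1}$, you write $\xi(\mathbf{0})=\mathbf{0}$ plus injectivity of $\zeta$), and finish with Theorem 4.4. For the converse, the paper only cites the non-invertibility of $\theta$, which is exactly the non-injectivity of $\xi$ that your variable $p$ witnesses. Both arguments tacitly need $\mathfrak{A}$ to have more than two elements, and since $\mathfrak{A}$ is not the free algebra you should claim only that \emph{some} variable has $[p]\neq\mathbf{0},\mathbf{1}$, not every one.
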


\begin{proof}
Recall that $\theta\circ\zeta=\xi$ is a Boolean $\sigma$-epimorphism of $\mathfrak{A}$ into $\mathcal{F}$, and suppose that ${\rm cl}~{\rm int}~\xi([F])^\ast=Z$, that is to say, by Theorem 3.3, $\xi([F])^\ast=Z$. Then $Z=(\theta\circ\zeta)([F])^\ast=\theta(\zeta([F])^\ast)$, which immediately implies that $\zeta([F])^\ast\linebreak\not=\emptyset$ because $\theta(\zeta([F])^\ast)=\emptyset$ whenever $\zeta([F])^\ast=\emptyset$. Hence, ${\rm int}~{\rm cl}~\zeta([F])^\ast\not=\emptyset$\linebreak by Theorem 4.4 whenever ${\rm cl}~{\rm int}~\xi([F])^\ast=Z$ as claimed. That the converse fails in general follows, once again, from the fact that $\theta$ is not invertible.
\end{proof}

It follows from Theorems 4.6, 4.7, and 4.8 that a sentence which is necessary is true, but not conversely; that a sentence which is necessary is possible (i.e. possibly true), but not conversely; and that a sentence which is true is possible, but not conversely. These results conform with the intuition and capture the basic properties of necessity and possibility.

\begin{theorem}
{\it Let $(\mathfrak{A},Z,\mathcal{E},\zeta)$ be a model for modal sentential calculi. For every equivalence class $[F]$ of formulas of $\mathcal{L}$,
\begin{gather*}
\textstyle{{\rm cl}~{\rm int}~\zeta(-[F])^\ast=Z~\text{if and only if}~{\rm int}~{\rm cl}~\zeta([F])^\ast=\emptyset,}
\end{gather*}
the open-closed representative $\zeta(-[F])^\ast$ of the image $\zeta(-[F])$ of the complement $-[F]$ of $[F]$ being non-boundary in $Z$ if and only if the open-closed representative $\zeta([F])^\ast$ of the image $\zeta([F])$ of $[F]$ is nowhere dense in $Z$.}
\end{theorem}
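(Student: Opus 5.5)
The plan is to mirror the proof of Theorem 3.4, with $\xi$ replaced by $\zeta$. The statement is an equality of the two predicates, so I will show that the sets ${\rm cl}~{\rm int}~\zeta(-[F])^\ast$ and $Z\setminus{\rm int}~{\rm cl}~\zeta([F])^\ast$ coincide. The equivalence then follows at once, since ${\rm cl}~{\rm int}~\zeta(-[F])^\ast=Z$ holds exactly when $Z\setminus{\rm int}~{\rm cl}~\zeta([F])^\ast=Z$, that is, exactly when ${\rm int}~{\rm cl}~\zeta([F])^\ast=\emptyset$.

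The first step is to use that $\zeta$ is a Boolean $\sigma$-isomorphism of $\mathfrak{A}$ into $\mathcal{E}=\mathcal{B}a(Z)/\mathcal{N}$. It therefore preserves complements: $\zeta(-[F])=\mathcal{E}\text{-complement of }\zeta([F])$. On the level of representatives, $Z\setminus\zeta([F])^\ast$ is open-closed whenever $\zeta([F])^\ast$ is, and it lies in the class $\zeta(-[F])$. By the uniqueness of open-closed representatives (remarked after Definition 3.1, and valid in $\mathcal{E}$ because a clopen nowhere dense set is empty), this gives $\zeta(-[F])^\ast=Z\setminus\zeta([F])^\ast$. This identification is the only step needing care, since $\zeta$ now maps into the full quotient $\mathcal{E}$ rather than into $\mathcal{F}$. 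Uniqueness holds there as well: if two clopen sets differ by a nowhere dense set, their symmetric difference is clopen and nowhere dense, hence empty.

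The second step is the standard duality of the topological calculus: ${\rm int}(Z\setminus A)=Z\setminus{\rm cl}~A$ and ${\rm cl}(Z\setminus A)=Z\setminus{\rm int}~A$. Applying it twice gives
\[
{\rm cl}~{\rm int}~(Z\setminus\zeta([F])^\ast)={\rm cl}~(Z\setminus{\rm cl}~\zeta([F])^\ast)=Z\setminus{\rm int}~{\rm cl}~\zeta([F])^\ast,
\]
which is the set equality I need.

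Alternatively, I could use that $\zeta([F])^\ast$ is clopen, so all the operators act trivially. Then ${\rm cl}~{\rm int}~\zeta(-[F])^\ast=Z$ iff $\zeta(-[F])^\ast=Z$ (Theorem 4.2), iff $\zeta([F])^\ast=\emptyset$, iff ${\rm int}~{\rm cl}~\zeta([F])^\ast=\emptyset$ (Schema (III)). Either route is routine. I expect no real obstacle beyond the representative identification in the first step.
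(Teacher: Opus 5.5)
Your proposal is correct and follows essentially the paper's proof. Like the paper, you identify $\zeta(-[F])^\ast$ with $Z\setminus\zeta([F])^\ast$ because $\zeta$ is a Boolean homomorphism, and then apply the duality ${\rm cl}~{\rm int}~(Z\setminus A)=Z\setminus{\rm int}~{\rm cl}~A$; your uniqueness argument for clopen representatives in $\mathcal{E}$ justifies a step the paper leaves implicit.
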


\begin{proof}
For ${\rm cl}~{\rm int}~\zeta(-[F])^\ast={\rm cl}~{\rm int}~Z\setminus\zeta([F])^\ast$ because $\zeta$ is a Boolean homomorphism and ${\rm cl}~{\rm int}~Z\setminus\zeta([F])^\ast=Z\setminus{\rm int}~{\rm cl}~\zeta([F])^\ast$ by the rules of the top-\linebreak ological calculus from which the conclusion follows immediately.
\end{proof}

\begin{theorem}
{\it Let $(\mathfrak{A},Z,\mathcal{E},\zeta)$ be a model for modal sentential calculi. For every equivalence class $[F_0],[F_1],[F_2],\ldots$ of formulas of $\mathcal{L}$,
\begin{gather*}
\hspace{-2cm}\textstyle{{\rm cl}~{\rm int}~\zeta(\bigvee_{n\in\omega}[F_n])^\ast=Z~\text{whenever}}\\
\hspace{2cm}\textstyle{{\rm cl}~{\rm int}~\zeta([F_n])^\ast=Z~\text{for some}~n\in\omega,}
\end{gather*}
the open-closed representative $\zeta(\bigvee_{n\in\omega}[F_n])^\ast$ of the image $\zeta(\bigvee_{n\in\omega}[F_n])$ of the join $\bigvee_{n\in\omega}[F_n]$ being non-boundary in $Z$ whenever at least one open-closed representative $\zeta([F_0])^\ast,\zeta([F_1])^\ast,\zeta([F_2])^\ast,\ldots$ of at least one image $\zeta([F_0]),\zeta([F_1]),\zeta([F_2]),\linebreak\ldots$ of $[F_0],[F_1],[F_2],\ldots$ is non-boundary in $Z$.}
\end{theorem}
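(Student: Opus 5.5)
The plan is to mirror the first half of the proof of Theorem 3.5. That direction used only that $\xi$ preserves countable joins and that non-boundary sets are closed upward. Here only the implication ``whenever'' is claimed, so no analogue of the converse (which in Theorem 3.5 relied on $\xi$ being an epimorphism onto the trivial algebra $\mathcal{F}$) is needed.

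Suppose that ${\rm cl}~{\rm int}~\zeta([F_k])^\ast=Z$ for some fixed $k\in\omega$. Since $[F_k]\leq\bigvee_{n\in\omega}[F_n]$ in $\mathfrak{A}$ and $\zeta$ is a Boolean $\sigma$-isomorphism, it preserves order and countable joins. Hence
\begin{gather*}
\zeta([F_k])\leq\textstyle{\bigvee_{n\in\omega}}\zeta([F_n])=\zeta(\textstyle{\bigvee_{n\in\omega}}[F_n])
\end{gather*}
in $\mathcal{E}$. The next step is to pass from this inequality of equivalence classes to an inclusion of their open-closed representatives, namely $\zeta([F_k])^\ast\subseteq\zeta(\bigvee_{n\in\omega}[F_n])^\ast$.

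This passage is the only point needing care, and I expect it to be the main obstacle. The plan is to show that $\zeta([F_k])^\ast\setminus\zeta(\bigvee_{n}[F_n])^\ast$ is an open-closed set. It represents the class $\zeta([F_k])\wedge-\zeta(\bigvee_n[F_n])$, which is the zero of $\mathcal{E}$, so the set is nowhere dense. An open nowhere dense set is empty, which gives the inclusion. Alternatively, one can shortcut this step with Theorem 4.2: the hypothesis gives $\zeta([F_k])^\ast=Z$, so $\zeta([F_k])$ is the unit of $\mathcal{E}$. Then $\zeta(\bigvee_n[F_n])$ is the unit as well, and uniqueness of the open-closed representative yields $\zeta(\bigvee_n[F_n])^\ast=Z$.

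Finally, every superset of a non-boundary set is non-boundary, since ${\rm cl}~{\rm int}$ is monotone. So ${\rm cl}~{\rm int}~\zeta(\bigvee_{n\in\omega}[F_n])^\ast\supseteq{\rm cl}~{\rm int}~\zeta([F_k])^\ast=Z$, as required. Equivalently, in the shortcut version, ${\rm cl}~{\rm int}~Z=Z$ by Theorem 4.2 applied in the reverse direction.
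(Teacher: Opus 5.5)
Your proposal is correct and follows essentially the same route as the paper: since $\zeta$ preserves countable joins, you get $\zeta([F_k])^\ast\subseteq\zeta(\bigvee_{n\in\omega}[F_n])^\ast$, and you conclude because every superset of a non-boundary set is non-boundary. What you add is an explicit justification for passing from $\zeta([F_k])\leq\zeta(\bigvee_{n\in\omega}[F_n])$ in $\mathcal{E}$ to inclusion of the open-closed representatives (the difference is a clopen nowhere dense set, hence empty), a step the paper takes for granted by writing $\bigvee_{n\in\omega}\zeta([F_n])^\ast$ directly.
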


\begin{proof}
Suppose that ${\rm cl}~{\rm int}~\zeta([F_n])^\ast=Z$ for some $n$. Then ${\rm cl}~{\rm int}~\zeta(\bigvee_{n\in\omega}[F_n])^\ast=Z$ because $\zeta([F_n])^\ast\subseteq\bigvee_{n\in\omega}\zeta([F_n])^\ast=\zeta(\bigvee_{n\in\omega}[F_n])^\ast$ for every $n$ and every superset of a non-boundary set is non-boundary.

To see that the necessity fails in general, suppose that $\zeta([F])^\ast$ is neither non-boundary nor nowhere dense in $Z$. Then $\zeta(-[F])^\ast$ is, by Theorem 4.9, neither nowhere dense nor non-boundary in $Z$. Yet $\zeta([F])^\ast\cup\zeta(-[F])^\ast=\zeta([F]\vee -[F])^\ast\linebreak=Z$ is certainly non-boundary in $Z$.
\end{proof}

\begin{theorem}
{\it Let $(\mathfrak{A},Z,\mathcal{E},\zeta)$ be a model for modal sentential calculi. For every equivalence class $[F_0],[F_1],[F_2],\ldots$ of formulas of $\mathcal{L}$
\begin{gather*}
\hspace{-2cm}\textstyle{{\rm cl}~{\rm int}~\zeta(\bigwedge_{n\in\omega}[F_n])^\ast=Z~\text{if and only if}}\\
\hspace{2cm}\textstyle{{\rm cl}~{\rm int}~\zeta([F_n])^\ast=Z~\text{for every}~n\in\omega,}
\end{gather*}
the open-closed representative $\zeta(\bigwedge_{n\in\omega}[F_n])^\ast$ of the image $\zeta(\bigwedge_{n\in\omega}[F_n])$ of the meet $\bigwedge_{n\in\omega}[F_n]$ being non-boundary in $Z$ if and only if all open-closed representatives $\zeta([F_0])^\ast,\zeta([F_1])^\ast,\zeta([F_2])^\ast,\ldots$ of all images $\zeta([F_0]),\zeta([F_1]),\zeta([F_2]),\ldots$ of $[F_0],[F_1],[F_2],\ldots$ are non-boundary in $Z$.}
\end{theorem}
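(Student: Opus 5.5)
I would follow the proof of Theorem 3.7 and adapt it to $\zeta$. The one thing that changes is that $\zeta$ is a Boolean $\sigma$-isomorphism into $\mathcal{E}$ rather than a $\sigma$-epimorphism onto the trivial algebra $\mathcal{F}$. The meet is handled by monotonicity in one direction and by Theorem 3.6 in the other.

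\emph{Necessity.} Suppose ${\rm cl}~{\rm int}~\zeta(\bigwedge_{n\in\omega}[F_n])^\ast=Z$. Since $\zeta$ is a Boolean $\sigma$-homomorphism, $\zeta(\bigwedge_{n\in\omega}[F_n])=\bigwedge_{n\in\omega}\zeta([F_n])\leq\zeta([F_n])$ in $\mathcal{E}$ for every $n$. For open-closed representatives, inequality modulo $\mathcal{N}$ means the difference $\zeta(\bigwedge_{n\in\omega}[F_n])^\ast\setminus\zeta([F_n])^\ast$ is a nowhere dense open set. A nowhere dense open set is empty, so $\zeta(\bigwedge_{n\in\omega}[F_n])^\ast\subseteq\zeta([F_n])^\ast$ genuinely. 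Every superset of a non-boundary set is non-boundary, so ${\rm cl}~{\rm int}~\zeta([F_n])^\ast=Z$ for every $n$.

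\emph{Sufficiency.} Suppose ${\rm cl}~{\rm int}~\zeta([F_n])^\ast=Z$ for every $n$. Then $\bigcap_{n\in\omega}\zeta([F_n])^\ast$ is a countable intersection of non-boundary sets, hence comeagre, hence non-boundary by Theorem 3.6. It remains to relate this set to $\zeta(\bigwedge_{n\in\omega}[F_n])^\ast$.

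This is the main obstacle. In $\mathcal{B}a(Z)/\mathcal{N}$ the meet $\bigwedge_{n\in\omega}\zeta([F_n])$ is the class of $\bigcap_{n\in\omega}\zeta([F_n])^\ast$. Its open-closed representative differs from that intersection by a set in $\mathcal{N}$; in fact it is ${\rm cl}~{\rm int}$ of the intersection, which is open by Theorem 2.7. So I would argue as follows:
\begin{gather*}
\textstyle{\zeta(\bigwedge_{n\in\omega}[F_n])^\ast\,\triangle\,\bigcap_{n\in\omega}\zeta([F_n])^\ast\in\mathcal{N}.}
\end{gather*}
Removing or adding a nowhere dense set does not change whether a set is non-boundary. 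Indeed, if $A$ is non-boundary and $N$ is nowhere dense, then ${\rm int}(A\setminus N)\supseteq{\rm int}~A\setminus{\rm cl}~N$. The set ${\rm int}~A\setminus{\rm cl}~N$ is dense, because ${\rm int}~A$ is dense and ${\rm cl}~N$ is closed with empty interior. Hence $\zeta(\bigwedge_{n\in\omega}[F_n])^\ast$ is non-boundary, as required. This is the dual of the step in Theorem 3.5 where $\bigcup_{n\in\omega}\xi([F_n])^\ast\triangle\bigvee_{n\in\omega}\xi([F_n])^\ast$ was noted to be nowhere dense.

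Alternatively, I would shortcut via Theorem 4.2, which turns every occurrence of ``non-boundary'' into ``$=Z$''. Then ${\rm cl}~{\rm int}~\zeta([F_n])^\ast=Z$ for all $n$ gives $\zeta([F_n])^\ast=Z$, so $\zeta([F_n])$ is the unit for every $n$. The meet is then the unit, so $\zeta(\bigwedge_{n\in\omega}[F_n])^\ast=Z$, which is non-boundary. I would present the topological argument, using Theorem 3.6, as the main proof and mention the Theorem 4.2 route as a check.
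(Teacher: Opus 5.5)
Your proof is correct and follows the paper's route: monotonicity (every superset of a non-boundary set is non-boundary) gives the forward direction, and Theorem 3.6 on comeagre sets gives the converse. Your two extra steps only make explicit what the paper compresses into the identification $\zeta(\bigwedge_{n\in\omega}[F_n])^\ast=\bigwedge_{n\in\omega}\zeta([F_n])^\ast$: first, that inclusion modulo $\mathcal{N}$ between clopen sets is genuine inclusion; second, that the clopen representative of the meet differs from $\bigcap_{n\in\omega}\zeta([F_n])^\ast$ only by a nowhere dense set, which does not affect being non-boundary.
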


\begin{proof}
Suppose that ${\rm cl}~{\rm int}~\zeta(\bigwedge_{n\in\omega}[F_n])^\ast=Z$. Then ${\rm cl}~{\rm int}~\zeta([F])^\ast=Z$ for every $n$ because $\zeta(\bigwedge_{n\in\omega}[F_n])^\ast=\bigwedge_{n\in\omega}\zeta([F_n])^\ast\subseteq\zeta([F_n])^\ast$ for every $n$ and every superset of a non-boundary set is non-boundary. The converse follows from Theorem 3.6, every comeagre subset of $Z$, that is to say, every countable intersection of non-boundary subsets of $Z$, being non-boundary.
\end{proof}

\begin{theorem}
{\it Let $(\mathfrak{A},Z,\mathcal{E},\zeta)$ be a model for modal sentential calculi. For every equivalence class $[F_1],[F_2]$ of formulas of $\mathcal{L}$,
\begin{gather*}
\hspace{-2cm}\textstyle{{\rm cl}~{\rm int}~\zeta([F_1]\rightarrow [F_2])^\ast=Z~\text{whenever}}\\
\hspace{2cm}\textstyle{{\rm cl}~{\rm int}~\zeta(-[F_1])^\ast=Z~\text{or}~{\rm cl}~{\rm int}~\zeta([F_2])^\ast=Z,}
\end{gather*}
the open-closed representative $\zeta([F_1]\rightarrow [F_2])^\ast$ of the image $\zeta([F_1]\rightarrow [F_2])$ of the relative pseudo-complement $[F_1]\rightarrow [F_2]$ being non-boundary in $Z$ whenever at least one open-closed representative $\zeta(-[F_1])^\ast$ or $\zeta([F_2])^\ast$ of at least one image $\zeta(-[F_1])$ of $-[F_1]$ or $\zeta([F_2])$ of $[F_2]$ is non-boundary in $Z$.}
\end{theorem}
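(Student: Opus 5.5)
The plan is to reduce the statement to Theorem 4.10, in the same way that Theorem 3.8 was obtained from Theorem 3.5. First I will rewrite the relative pseudo-complement in a Boolean algebra as $[F_1]\rightarrow[F_2]=-[F_1]\vee[F_2]$. Since $\zeta$ is a Boolean $\sigma$-isomorphism of $\mathfrak{A}$ into $\mathcal{E}$, and open-closed representatives are unique, this gives
\begin{gather*}
\textstyle{\zeta([F_1]\rightarrow[F_2])^\ast=\zeta(-[F_1]\vee[F_2])^\ast=\zeta(-[F_1])^\ast\cup\zeta([F_2])^\ast.}
\end{gather*}
On open-closed representatives the finite join is the set-theoretic union, with no nowhere dense discrepancy.

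Next I will apply Theorem 4.10 to the sequence $[F_0']=-[F_1]$, $[F_1']=[F_2]$, and $[F_n']=[F_2]$ for $n\geq 2$, so that $\bigvee_{n\in\omega}[F_n']=-[F_1]\vee[F_2]$. Alternatively, I can argue directly, as in the first half of the proof of Theorem 4.10. Suppose ${\rm cl}~{\rm int}~\zeta(-[F_1])^\ast=Z$ or ${\rm cl}~{\rm int}~\zeta([F_2])^\ast=Z$. Each of $\zeta(-[F_1])^\ast$ and $\zeta([F_2])^\ast$ is contained in $\zeta([F_1]\rightarrow[F_2])^\ast$. Every superset of a non-boundary set is non-boundary, because ${\rm cl}~{\rm int}$ is monotone. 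Hence ${\rm cl}~{\rm int}~\zeta([F_1]\rightarrow[F_2])^\ast=Z$.

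The only step needing care is the identification of $\rightarrow$ with $-[F_1]\vee[F_2]$ and its transport through $\zeta$ to a union of open-closed sets; this holds because $\zeta$ preserves complements and finite joins. The statement asserts only ``whenever'', so no converse is required. If I want to remark that the converse fails, I will reuse the counterexample from Theorem 4.10 with $[F_1]=[F_2]=[F]$. Here $\zeta([F])^\ast$ is chosen neither non-boundary nor nowhere dense. Then $\zeta([F]\rightarrow[F])^\ast=Z$, while by Theorem 4.9 neither $\zeta(-[F])^\ast$ nor $\zeta([F])^\ast$ is non-boundary.
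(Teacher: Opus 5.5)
Your proposal is correct and follows the paper's route. The paper also obtains the result as an immediate consequence of Theorem 4.10, reading $[F_1]\rightarrow[F_2]$ as $-[F_1]\vee[F_2]$; your padded sequence just makes the countable-join form explicit. For the failure of the converse, the paper likewise uses the tautology $[F]\rightarrow[F]$ with $\zeta([F])^\ast$ neither non-boundary nor nowhere dense, exactly as you do via Theorem 4.9.
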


\begin{proof}
As an immediate consequence of Theorem 4.10.

In fact, to see that the necessity fails in general, suppose that $\zeta([F])^\ast$ is neither non-boundary nor nowhere dense in $Z$. Then $\zeta(-[F])^\ast$ is neither nowhere dense nor non-boundary in $Z$. Yet $\zeta(-[F])^\ast\cup\zeta([F])^\ast=\zeta(-[F]\vee [F])^\ast=\zeta([F]\rightarrow [F])^\ast$ is non-boundary in $Z$ because $[F]\rightarrow [F]$ is a tautology of $\mathcal{L}$.
\end{proof}

\begin{definition}
A formally correct definition of the predicate {\it is necessary}, formulated in the metalanguage $\mathcal{L}^E$ of mathematical English, is {\it weakly materially adequate} if it implies some instances of Schemas (IV).
\end{definition}

\begin{theorem}
{\it Let $(\mathfrak{A},Z,\mathcal{E},\zeta)$ be a model for modal sentential calculi. The following schemas constitute a formally correct and weakly materially adequate definition of the predicate \textup{is necessary:}
\begin{align*}
&\textstyle{{\rm cl}~{\rm int}~\zeta([F])^\ast=Z~\text{if and only if}~\zeta([F])^\ast=Z;}\\
&\textstyle{{\rm cl}~{\rm int}~\zeta(-[F])^\ast=Z~\text{if and only if}~{\rm int}~{\rm cl}~\zeta([F])^\ast=\emptyset;}\\
&\textstyle{{\rm cl}~{\rm int}~\zeta(\bigvee_{n\in\omega}[F_n])^\ast=Z~\text{whenever}~{\rm cl}~{\rm int}~\zeta([F_n])^\ast=Z~\text{for some}~n;}\\
&\textstyle{{\rm cl}~{\rm int}~\zeta(\bigwedge_{n\in\omega}[F_n])^\ast=Z~\text{if and only if}~{\rm cl}~{\rm int}~\zeta([F_n])^\ast=Z~\text{for every}~n;}\\
&\textstyle{{\rm cl}~{\rm int}~\zeta([F_1]\rightarrow [F_2])^\ast=Z~\text{whenever}}\\
&\hspace{2cm}\textstyle{{\rm cl}~{\rm int}~\zeta(-[F_1])^\ast=Z~\text{or}~{\rm cl}~{\rm int}~\zeta([F_2])^\ast=Z,}
\end{align*}
the open-closed representative $\zeta([F])^\ast$ of the image $\zeta([F])$ of $[F]$ being non-boundary in $Z$ if and only if it is the space $Z$; or $\zeta(-[F])^\ast$ is non-boundary in $Z$ if and only if $\zeta([F])^\ast$ is nowhere dense in $Z$; or $\zeta(\bigvee_{n\in\omega}[F_n])^\ast$ is non-boundary in $Z$ whenever at least one $\zeta([F_0])^\ast,\zeta([F_1])^\ast,\zeta([F_2])^\ast,\ldots$ is non-boundary in $Z$; or $\zeta(\bigwedge_{n\in\omega}[F_n])^\ast$ is non-boundary in $Z$ if and only if all $\zeta([F_0])^\ast,\zeta([F_1])^\ast,\zeta([F_2])^\ast,\ldots$ are non-boundary in $Z$; or $\zeta([F_1]\rightarrow [F_2])^\ast$ is non-boundary in $Z$ whenever at least one of $\zeta(-[F_1])^\ast$ or $\zeta([F_2])^\ast$ is non-boundary in $Z$.}
\end{theorem}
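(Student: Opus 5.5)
The plan mirrors the proof of Theorem 3.9: the statement is an assembly of results already established in this section, so the proof consists in locating each schema among the earlier theorems and checking the two qualifiers, formal correctness and weak material adequacy.

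First, each of the five schemas is identified with an earlier theorem. The first schema is Schema (IV), that is, Theorem 4.2. The second schema, relating the necessity of $-[F]$ to the impossibility of $[F]$, is Theorem 4.9. The third schema, for countable joins, is Theorem 4.10. The fourth schema, for countable meets, is Theorem 4.11; its converse direction rests on Theorem 3.6, which says that comeagre sets in $Z$ are non-boundary. The fifth schema, for relative pseudo-complements, is Theorem 4.12. So every schema is available in the metalanguage $\mathcal{L}^E$ as a consequence of Schema (III) and the structure of $\zeta$.

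Second, formal correctness is checked exactly as in the discussion following Theorem 3.3. In each schema, the expression $\zeta([F])^\ast$ in the definiendum names $[F]$, because $\zeta$ is a $\sigma$-isomorphism and each class of $\mathcal{E}$ has at most one open-closed representative. The predicate applied to this name is ${\rm cl}~{\rm int}~\cdot=Z$, here read as \emph{is necessary}. The definiens is a sentence of $\mathcal{L}^E$. Hence the constructions are formally correct in Tarski's sense.

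Third, weak material adequacy in the sense of Definition 4.13 only requires that the definition imply some instances of Schema (IV). The first schema listed is Schema (IV) itself, so every instance is implied. The clauses for the connectives then propagate this to compound classes: in the meet case with full equivalence, and in the join and implication cases in one direction only.

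The main point needing care is that the join and implication clauses are genuine one-way implications. This is why only weak, rather than full, material adequacy is claimed, in contrast to Theorem 3.9. I would justify it by citing the counterexamples in the proofs of Theorems 4.10 and 4.12. Take a class $[F]$ whose representative is neither non-boundary nor nowhere dense in $Z$. Then neither $\zeta([F])^\ast$ nor $\zeta(-[F])^\ast$ is non-boundary, yet $\zeta([F]\vee -[F])^\ast=Z$ is. Such a class exists because $\zeta$, unlike $\xi$, is an isomorphism onto a non-trivial algebra. With this noted, the proof is simply: by Theorems 4.2, 4.9, 4.10, 4.11, and 4.12.
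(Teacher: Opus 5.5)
Your proposal is correct and is essentially the paper's proof, which consists solely of citing Theorems 4.2, 4.9, 4.10, 4.11, and 4.12, exactly as you conclude; your remarks on formal correctness and on the one-way join and implication clauses only elaborate that citation. One small labelling slip: the truth-definition theorem you say you are mirroring is Theorem 3.10, not 3.9, because definitions share the theorem counter and 3.9 is Convention T.
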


\begin{proof}
By Theorems 4.2, 4.9, 4.10, 4.11, and 4.12.
\end{proof}

The definition of necessity formulated in Theorem 4.14 conforms with the intuition. Hence, it follows from Theorem 4.9 that a sentence is necessary if and only if its negation is impossible (i.e. not possibly true) and from Theorem 4.11 that a conjunction of sentences is necessary if and only if all the sentences forming the conjunction are necessary. By Theorem 4.10, however, a disjunction of sentences may be necessary although none of the sentences forming the disjunction are necessary, a result naturally extending, {\it mutatis mutandis}, to material implication by Theorem 4.12.

\begin{theorem}
{\it Let $(\mathfrak{A},Z,\mathcal{E},\zeta)$ be a model for modal sentential calculi. For every equivalence class $[F]$ of formulas of $\mathcal{L}$,
\begin{gather*}
\textstyle{{\rm int}~{\rm cl}~\zeta(-[F])^\ast\not=\emptyset~\text{if and only if}~{\rm cl}~{\rm int}~\zeta([F])^\ast\not=Z,}
\end{gather*}
the open-closed representative $\zeta(-[F])^\ast$ of the image $\zeta(-[F])$ of the complement $-[F]$ of $[F]$ being somewhere dense in $Z$ if and only if the open-closed representative $\zeta([F])^\ast$ of the image $\zeta([F])$ of $[F]$ is somewhere codense in $Z$.}
\end{theorem}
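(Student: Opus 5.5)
The plan is to derive the statement as the contrapositive of Theorem 4.9, in the same way that Theorems 4.4 and 4.5 were obtained from Schemas (III) and (IV) by negating both sides. Theorem 4.9, applied to the class $-[F]$ rather than $[F]$, gives
\begin{gather*}
\textstyle{{\rm cl}~{\rm int}~\zeta(--[F])^\ast=Z~\text{if and only if}~{\rm int}~{\rm cl}~\zeta(-[F])^\ast=\emptyset.}
\end{gather*}
Since $\mathfrak{A}$ is a Boolean algebra, $--[F]=[F]$, so the left-hand side is simply ${\rm cl}~{\rm int}~\zeta([F])^\ast=Z$. Negating both sides of the resulting biconditional then yields ${\rm int}~{\rm cl}~\zeta(-[F])^\ast\not=\emptyset$ if and only if ${\rm cl}~{\rm int}~\zeta([F])^\ast\not=Z$. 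This is the claim, once it is read through Definition 4.3: the left side says $\zeta(-[F])^\ast$ is somewhere dense, and the right side says $\zeta([F])^\ast$ is somewhere codense.

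As a cross-check, or as an alternative route if one prefers not to substitute $-[F]$ into Theorem 4.9, I would argue directly.
\begin{itemize}
\item Because $\zeta$ is a Boolean homomorphism and open-closed representatives are unique, $\zeta(-[F])^\ast=Z\setminus\zeta([F])^\ast$.
\item The topological identity ${\rm int}~{\rm cl}~(Z\setminus A)=Z\setminus{\rm cl}~{\rm int}~A$ then gives ${\rm int}~{\rm cl}~\zeta(-[F])^\ast=Z\setminus{\rm cl}~{\rm int}~\zeta([F])^\ast$.
\item The left side is non-empty exactly when ${\rm cl}~{\rm int}~\zeta([F])^\ast\not=Z$.
\end{itemize}

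The only point needing care is the uniqueness of open-closed representatives, which justifies $\zeta(-[F])^\ast=Z\setminus\zeta([F])^\ast$. This holds because the complement of a clopen set is clopen, and two clopen sets differing by a nowhere dense set coincide: their symmetric difference is open, and an open nowhere dense set is empty. I expect no real obstacle beyond stating this remark. One could also note that, since the representatives are clopen, both sides are equivalent to $\zeta([F])^\ast\not=Z$, by Schemas (V) and (VI) together with Theorem 4.2.
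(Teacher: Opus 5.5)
Your proposal is correct, and your ``alternative'' direct argument is exactly the paper's proof: $\zeta(-[F])^\ast=Z\setminus\zeta([F])^\ast$ because $\zeta$ is a Boolean homomorphism, followed by the identity ${\rm int}~{\rm cl}~(Z\setminus A)=Z\setminus{\rm cl}~{\rm int}~A$. Your primary route (substituting $-[F]$ into Theorem 4.9, using $--[F]=[F]$, and contraposing) is an equally valid repackaging of the same argument, since it reuses the dual identity already invoked in the proof of Theorem 4.9.
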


\begin{proof}
For ${\rm int}~{\rm cl}~\zeta(-[F])^\ast={\rm int}~{\rm cl}~Z\setminus\zeta([F])^\ast$ because $\zeta$ is a Boolean homomorphism and ${\rm int}~{\rm cl}~Z\setminus\zeta([F])^\ast=Z\setminus{\rm cl}~{\rm int}~\zeta([F])^\ast$ by the rules of the top-\linebreak ological calculus from which the conclusion follows immediately.
\end{proof}

\begin{theorem}
{\it Let $(\mathfrak{A},Z,\mathcal{E},\zeta)$ be a model for modal sentential calculi. For every equivalence class $[F_0],[F_1],[F_2],\ldots$ of formulas of $\mathcal{L}$,
\begin{gather*}
\hspace{-2cm}\textstyle{{\rm int}~{\rm cl}~\zeta(\bigvee_{n\in\omega}[F_n])^\ast\not=\emptyset~\text{if and only if}}\\
\hspace{2cm}\textstyle{{\rm int}~{\rm cl}~\zeta([F_n])^\ast\not=\emptyset~\text{for some}~n\in\omega,}
\end{gather*}
the open-closed representative $\zeta(\bigvee_{n\in\omega}[F_n])^\ast$ of the image $\zeta(\bigvee_{n\in\omega}[F_n])$ of the join $\bigvee_{n\in\omega}[F_n]$ being somewhere dense in $Z$ if and only if at least one open-closed representative $\zeta([F_0])^\ast,\zeta([F_1])^\ast,\zeta([F_2])^\ast,\ldots$ of at least one image $\zeta([F_0]),\zeta([F_1]),\linebreak\zeta([F_2]),\ldots$ of $[F_0],[F_1],[F_2],\ldots$ is somewhere dense in $Z$.}
\end{theorem}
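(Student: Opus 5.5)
We prove Theorem 4.16 by treating its two implications separately, following the pattern of Theorem 3.5 with ``non-boundary'' replaced by ``somewhere dense'' and using the contrapositive of Theorem 2.9.

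\emph{Sufficiency.} Suppose ${\rm int}~{\rm cl}~\zeta([F_n])^\ast\not=\emptyset$ for some $n$. Since $\zeta$ is a Boolean $\sigma$-homomorphism, $[F_n]\leq\bigvee_{n\in\omega}[F_n]$ gives $\zeta([F_n])\leq\zeta(\bigvee_{n\in\omega}[F_n])$. The representatives are open-closed, and distinct open-closed sets cannot differ by a nowhere dense set, so this order relation forces the inclusion $\zeta([F_n])^\ast\subseteq\zeta(\bigvee_{n\in\omega}[F_n])^\ast$. The operators ${\rm cl}$ and ${\rm int}$ are monotone, so every superset of a somewhere dense set is somewhere dense. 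This gives the claim.

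\emph{Necessity.} We argue by contraposition. Assume ${\rm int}~{\rm cl}~\zeta([F_n])^\ast=\emptyset$ for every $n$, so each $\zeta([F_n])^\ast$ is nowhere dense.
\begin{itemize}
\item The union $\bigcup_{n\in\omega}\zeta([F_n])^\ast$ is meagre, hence nowhere dense by Theorem 2.9.
\item The join $\bigvee_{n\in\omega}\zeta([F_n])^\ast$ is computed in $\mathcal{E}=\mathcal{B}a(Z)/\mathcal{N}$. Its open-closed representative is ${\rm cl}\bigcup_{n\in\omega}\zeta([F_n])^\ast$, which is open by Theorem 2.7, and it differs from the union by a nowhere dense set.
\item The closure of a nowhere dense set is nowhere dense. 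Hence $\zeta(\bigvee_{n\in\omega}[F_n])^\ast$ is nowhere dense, contradicting the hypothesis.
\end{itemize}

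There is a shortcut for this step. Each $\zeta([F_n])^\ast$ is open-closed and nowhere dense, so by Schema (III) it is empty. Then $\zeta([F_n])=0$ in $\mathcal{E}$ for every $n$, so $\zeta(\bigvee_{n\in\omega}[F_n])=\bigvee_{n\in\omega}\zeta([F_n])=0$, and its representative is $\emptyset$, which is nowhere dense. Here the $\sigma$-homomorphism property replaces the appeal to Theorem 2.9.

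The main obstacle is the identification $\zeta(\bigvee_{n\in\omega}[F_n])^\ast$ versus $\bigcup_{n\in\omega}\zeta([F_n])^\ast$. The union of countably many open-closed sets need not be closed, so the two sets may differ, and we must check that they differ only by a nowhere dense set. We plan to settle this using Theorem 2.7 and the fact that the boundary of an open set is nowhere dense. Failing that, the Schema (III) shortcut avoids the issue entirely.
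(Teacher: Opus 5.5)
Your proof is correct and follows the paper's own argument: monotonicity for sufficiency, and for necessity Theorem 2.9 makes $\bigcup_{n\in\omega}\zeta([F_n])^\ast$ nowhere dense, and it differs from $\zeta(\bigvee_{n\in\omega}[F_n])^\ast$ only by a nowhere dense set. The paper simply asserts that last step, and you correctly justify it via Theorem 2.7 and the nowhere denseness of the boundary of an open set. Your Schema (III) shortcut is also valid, since a clopen nowhere dense set is empty outright; it shows the theorem reduces to the fact that a countable join in $\mathcal{E}$ is $0$ iff every term is $0$, so Theorem 2.9 is not actually needed.
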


\begin{proof}
Suppose that ${\rm int}~{\rm cl}~\zeta([F_n])^\ast\not=\emptyset$ for some $n$. Then ${\rm int}~{\rm cl}~\zeta(\bigvee_{n\in\omega}[F_n])^\ast\not=\emptyset$ because $\zeta([F_n])^\ast\subseteq\bigvee_{n\in\omega}\zeta([F_n])^\ast=\zeta(\bigvee_{n\in\omega}[F_n])^\ast$ for every $n$ and every superset of a somewhere dense set is somewhere dense.

Conversely, suppose that ${\rm int}~{\rm cl}~\zeta(\bigvee_{n\in\omega}[F_n])^\ast\not=\emptyset$. It is shown that ${\rm int}~{\rm cl}~\zeta([F_n])^\ast\linebreak\not=\emptyset$ for some $n$. For suppose this is not the case. Then ${\rm int}~{\rm cl}~\zeta([F_n])^\ast=\emptyset$ for every $n$, whence ${\rm int}~{\rm cl}~\bigcup_{n\in\omega}\zeta([F_n])^\ast=\emptyset$ by Theorem 2.9, which implies that ${\rm int}~{\rm cl}~\zeta(\bigvee_{n\in\omega}[F_n])^\ast=\emptyset$ because $\bigcup_{n\in\omega}\zeta([F_n])^\ast\triangle\bigvee_{n\in\omega}\zeta([F_n])^\ast$ is nowhere dense in $Z$. A contradiction.
\end{proof}

\begin{theorem}
{\it Let $(\mathfrak{A},Z,\mathcal{E},\zeta)$ be a model for modal sentential calculi. For every equivalence class $[F_0],[F_1],[F_2],\ldots$ of formulas of $\mathcal{L}$,
\begin{gather*}
\hspace{-2cm}\textstyle{{\rm int}~{\rm cl}~\zeta([F_n])^\ast\not=\emptyset~\text{for every}~n\in\omega}\\
\hspace{2cm}\textstyle{\text{whenever}~{\rm int}~{\rm cl}~\zeta(\bigwedge_{n\in\omega}[F_n])^\ast\not=\emptyset,}
\end{gather*}
all open-closed representatives $\zeta([F_0])^\ast,\zeta([F_1])^\ast,\zeta([F_2])^\ast,\ldots$ of all images $\zeta([F_0]),\linebreak\zeta([F_1]),\zeta([F_2]),\ldots$ of $[F_0],[F_1],[F_2],\ldots$ being somewhere dense in $Z$ whenever the open-closed representative $\zeta(\bigwedge_{n\in\omega}[F_n])^\ast$ of the image $\zeta(\bigwedge_{n\in\omega}[F_n])$ of the meet $\bigwedge_{n\in\omega}[F_n]$ is somewhere dense in $Z$.}
\end{theorem}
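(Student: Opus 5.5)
The plan is to mirror the first half of the proof of Theorem 4.11, with ``non-boundary'' replaced by ``somewhere dense''. The argument rests on monotonicity: every superset of a somewhere dense set is somewhere dense. The reason is that if $A\subseteq B$, then ${\rm cl}~A\subseteq{\rm cl}~B$, hence ${\rm int}~{\rm cl}~A\subseteq{\rm int}~{\rm cl}~B$. So ${\rm int}~{\rm cl}~A\not=\emptyset$ forces ${\rm int}~{\rm cl}~B\not=\emptyset$.

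First, fix $n\in\omega$. I will use that $\zeta$ is a Boolean $\sigma$-homomorphism and that open-closed representatives are unique. Together these give
\begin{gather*}
\textstyle{\zeta(\bigwedge_{n\in\omega}[F_n])^\ast=\bigwedge_{n\in\omega}\zeta([F_n])^\ast\subseteq\zeta([F_n])^\ast,}
\end{gather*}
exactly as in the proof of Theorem 4.11. The meet on the right is the open-closed representative of the meet in $\mathcal{E}$. This set is contained in the open-closed set $\zeta([F_n])^\ast$, since $\bigwedge_{m}[F_m]\leq[F_n]$ and $\zeta$ preserves order. Second, assume ${\rm int}~{\rm cl}~\zeta(\bigwedge_{n\in\omega}[F_n])^\ast\not=\emptyset$. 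Monotonicity then gives ${\rm int}~{\rm cl}~\zeta([F_n])^\ast\not=\emptyset$. Since $n$ was arbitrary, the conclusion holds for every $n$.

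Alternatively, I can route the argument through Theorem 4.4. The hypothesis gives $\zeta(\bigwedge_n[F_n])^\ast\not=\emptyset$, and this nonempty set is contained in each $\zeta([F_n])^\ast$, so each of those is nonempty. Applying Theorem 4.4 again shows each is somewhere dense. This avoids any topological computation.

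The only delicate point is the inclusion of representatives: the order of $\mathcal{E}$ has to be reflected by the open-closed representatives themselves, not merely modulo $\mathcal{N}$. I would justify this by observing that two clopen sets differing by a nowhere dense set coincide, because their symmetric difference is open. Hence $[A]\leq[B]$ with $A,B$ clopen gives $A\setminus B$ an open nowhere dense set, so $A\setminus B=\emptyset$. I would also remark that the converse fails. For instance, with $[F_0]=[F]$ and $[F_1]=-[F]$ both somewhere dense, the meet is $0$, whose representative is empty and hence nowhere dense. This explains why the statement is only a one-way implication.
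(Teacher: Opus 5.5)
Your proof is correct and follows the paper's argument. Both rest on the inclusion $\zeta(\bigwedge_{n\in\omega}[F_n])^\ast\subseteq\zeta([F_n])^\ast$ plus the fact that every superset of a somewhere dense set is somewhere dense; your counterexample with $[F]$ and $-[F]$ is the paper's example of disjoint somewhere dense representatives whose meet is empty, and your check that a clopen nowhere dense set is empty fills in why the order of $\mathcal{E}$ carries over to the open-closed representatives, a step the paper leaves implicit.
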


\begin{proof}
Suppose that ${\rm int}~{\rm cl}~\zeta(\bigwedge_{n\in\omega}[F_n])^\ast\not=\emptyset$. Then ${\rm int}~{\rm cl}~\zeta([F])^\ast\not=\emptyset$ for every $n$ because $\zeta(\bigwedge_{n\in\omega}[F_n])^\ast=\bigwedge_{n\in\omega}\zeta([F_n])^\ast\subseteq\zeta([F_n])^\ast$ for every $n$ and every superset of a somewhere dense set is somewhere dense.

To see that the necessity fails in general, suppose that $\zeta([F_1])^\ast,\zeta([F_2])^\ast$ are disjoint somewhere dense subsets of $Z$. Then $\zeta([F_1])^\ast\cap\zeta([F_2])^\ast=\zeta([F_1]\wedge [F_2])^\ast\linebreak=\emptyset$ is nowhere dense in $Z$.
\end{proof}

\begin{theorem}
{\it Let $(\mathfrak{A},Z,\mathcal{E},\zeta)$ be a model for modal sentential calculi. For every equivalence class $[F_1],[F_2]$ of formulas of $\mathcal{L}$,
\begin{gather*}
\hspace{-2cm}\textstyle{{\rm int}~{\rm cl}~\zeta([F_1]\rightarrow [F_2])^\ast\not=\emptyset~\text{if and only if}}\\
\hspace{2cm}\textstyle{{\rm int}~{\rm cl}~\zeta(-[F_1])^\ast\not=\emptyset~\text{or}~{\rm int}~{\rm cl}~\zeta([F_2])^\ast\not=\emptyset,}
\end{gather*}
the open-closed representative $\zeta([F_1]\rightarrow [F_2])^\ast$ of the image $\zeta([F_1]\rightarrow [F_2])$ of the relative pseudo-complement $[F_1]\rightarrow [F_2]$ being somewhere dense in $Z$ if and only if at least one open-closed representative $\zeta(-[F_1])^\ast$ or $\zeta([F_2])^\ast$ of at least one image $\zeta(-[F_1])$ of $-[F_1]$ or $\zeta([F_2])$ of $[F_2]$ is somewhere dense in $Z$.}
\end{theorem}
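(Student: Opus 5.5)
The plan is to reduce the statement to Theorem 4.16, in the same way that Theorem 3.8 was obtained from Theorem 3.5 and Theorem 4.12 from Theorem 4.10.

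First, in a Boolean algebra the relative pseudo-complement is $[F_1]\rightarrow[F_2]=-[F_1]\vee[F_2]$. Since $\zeta$ is a Boolean $\sigma$-isomorphism, it preserves joins and complements, so
\begin{gather*}
\zeta([F_1]\rightarrow[F_2])=\zeta(-[F_1]\vee[F_2])=\zeta(-[F_1])\vee\zeta([F_2]).
\end{gather*}
The open-closed representative is uniquely determined, as in the remark after Definition 3.1. Taking representatives therefore gives
\begin{gather*}
\zeta([F_1]\rightarrow[F_2])^\ast=\zeta(-[F_1])^\ast\cup\zeta([F_2])^\ast,
\end{gather*}
because a finite union of open-closed sets is open-closed and represents the join.

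Next, apply Theorem 4.16 to the countable family $[G_0]=-[F_1]$, $[G_1]=[F_2]$, $[G_n]=[F_2]$ for $n\geq 2$. Its join is $-[F_1]\vee[F_2]$. Theorem 4.16 then says that ${\rm int}~{\rm cl}~\zeta([F_1]\rightarrow[F_2])^\ast\not=\emptyset$ if and only if ${\rm int}~{\rm cl}~\zeta([G_n])^\ast\not=\emptyset$ for some $n$. That is exactly the condition ${\rm int}~{\rm cl}~\zeta(-[F_1])^\ast\not=\emptyset$ or ${\rm int}~{\rm cl}~\zeta([F_2])^\ast\not=\emptyset$.

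The finite case can also be handled directly, avoiding Theorem 2.9 entirely. One direction is monotonicity: every superset of a somewhere dense set is somewhere dense. For the other direction, ${\rm cl}(A\cup B)={\rm cl}~A\cup{\rm cl}~B$. Moreover, a finite union of nowhere dense sets is nowhere dense in any topological space. Hence if both $\zeta(-[F_1])^\ast$ and $\zeta([F_2])^\ast$ are nowhere dense, so is their union.

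The only point needing care is the identification of the open-closed representative of the join with the set-theoretic union. In the countable case the paper handles this via a nowhere dense symmetric difference. In the binary case there is no gap, since clopen sets are closed under finite unions. I expect no substantive obstacle.
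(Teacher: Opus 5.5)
Your proposal is correct and matches the paper, whose entire proof is ``as an immediate consequence of Theorem 4.16'': this is exactly your reduction via $[F_1]\rightarrow[F_2]=-[F_1]\vee[F_2]$ and a padded countable family. Your supplementary binary argument is also valid and shows that Theorem 2.9 is not needed here. It rests on two facts: the open-closed representative of a binary join is the plain union, and a union of two nowhere dense sets is nowhere dense.
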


\begin{proof}
As an immediate consequence of Theorem 4.16.
\end{proof}

\begin{definition}
A formally correct definition of the predicate {\it is possible}, formulated in the metalanguage $\mathcal{L}^E$ of mathematical English, is {\it weakly materially adequate} if it is implied by some instances of Schema (V).
\end{definition}

\begin{theorem}
{\it Let $(\mathfrak{A},Z,\mathcal{E},\zeta)$ be a model for modal sentential calculi. The following schemas constitute a formally correct and weakly materially adequate definition of the predicate \textup{is possible:}
\begin{align*}
&\textstyle{{\rm int}~{\rm cl}~\zeta([F])^\ast\not=\emptyset~\text{if and only if}~\zeta([F])^\ast\not=\emptyset;}\\
&\textstyle{{\rm int}~{\rm cl}~\zeta(-[F])^\ast\not=\emptyset~\text{if and only if}~{\rm cl}~{\rm int}~\zeta([F])^\ast\not=Z;}\\
&\textstyle{{\rm int}~{\rm cl}~\zeta(\bigvee_{n\in\omega}[F_n])^\ast\not=\emptyset~\text{if and only if}~{\rm int}~{\rm cl}~\zeta([F_n])^\ast\not=\emptyset~\text{for some}~n;}\\
&\textstyle{{\rm int}~{\rm cl}~\zeta([F_n])^\ast\not=\emptyset~\text{for every}~n~\text{whenever}~{\rm int}~{\rm cl}~\zeta(\bigwedge_{n\in\omega}[F_n])^\ast\not=\emptyset;}\\
&\textstyle{{\rm int}~{\rm cl}~\zeta([F_1]\rightarrow [F_2])^\ast\not=\emptyset~\text{if and only if}}\\
&\hspace{2cm}\textstyle{{\rm int}~{\rm cl}~\zeta(-[F_1])^\ast\not=\emptyset~\text{or}~{\rm int}~{\rm cl}~\zeta([F_2])^\ast\not=\emptyset,}
\end{align*}
the open-closed representative $\zeta([F])^\ast$ of the image $\zeta([F])$ of $[F]$ being somewhere dense in $Z$ if and only if it is a non-empty set; or $\zeta(-[F])^\ast$ is somewhere dense in $Z$ if and only if $\zeta([F])^\ast$ is somewhere codense in $Z$; or $\zeta(\bigvee_{n\in\omega}[F_n])^\ast$ is somewhere dense in $Z$ if and only if at least one $\zeta([F_0])^\ast,\zeta([F_1])^\ast,\zeta([F_2])^\ast,\ldots$ is somewhere dense in $Z$; or all $\zeta([F_0])^\ast,\zeta([F_1])^\ast,\zeta([F_2])^\ast,\ldots$ are somewhere dense in $Z$ whenever $\zeta(\bigwedge_{n\in\omega}[F_n])^\ast$ is somewhere dense in $Z$; or $\zeta([F_1]\rightarrow [F_2])^\ast$ is somewhere dense in $Z$ if and only if at least one of $\zeta(-[F_1])^\ast$ or $\zeta([F_2])^\ast$ is somewhere dense in $Z$.}
\end{theorem}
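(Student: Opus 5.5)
The plan is to follow the proofs of Theorems 3.9 and 4.14: assemble the five schemas from results already established in Section 4, then check formal correctness and weak material adequacy (Definition 4.18). The schemas correspond one-to-one to earlier theorems. The first is Schema (V), i.e. Theorem 4.4. The second is Theorem 4.15. The third, on countable joins, is Theorem 4.16. The fourth, on countable meets, is the one-directional Theorem 4.17. The fifth, on relative pseudo-complements, is Theorem 4.18 (the statement preceding the definition). So the first step is simply to cite Theorems 4.4, 4.15, 4.16, 4.17 and 4.18 in turn. Since each was proved for an arbitrary model $(\mathfrak{A},Z,\mathcal{E},\zeta)$ and arbitrary equivalence classes, each schema holds in every such model.

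The second step is formal correctness, in the sense used after Theorem 3.3. In each schema the definiendum has the form ${\rm int}~{\rm cl}~\zeta(\cdot)^\ast\not=\emptyset$. Here the open-closed representative $\zeta(\cdot)^\ast$ is the name of the equivalence class, and ${\rm int}~{\rm cl}~\cdot\not=\emptyset$ is the predicate \emph{is possible}. The definiens is stated in the metalanguage $\mathcal{L}^E$, either directly ($\zeta([F])^\ast\not=\emptyset$) or recursively through the possibility or necessity of the constituents. I will note that no new symbol is added to $\mathcal{L}$. I will also note that the representatives are uniquely determined, because $\zeta$ is a $\sigma$-isomorphism onto a subalgebra of $\mathcal{B}a(Z)/\mathcal{N}$ and each class has at most one open-closed representative. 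This is the same remark as after Definition 3.1.

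The third step is weak material adequacy: the definition must be implied by some instances of Schema (V). The base clause is literally Schema (V). The clauses for $-$, $\bigvee$, and $\rightarrow$ are biconditionals, so they reduce possibility of compound classes to possibility or necessity of their components. The necessity case is handled by Schema (IV) through Theorem 4.15 and Theorem 4.6. The $\bigwedge$ clause is only a one-way implication. The earlier argument with disjoint somewhere dense sets $\zeta([F_1])^\ast,\zeta([F_2])^\ast$ shows it cannot be strengthened, which is why only \emph{some} instances of Schema (V) are recovered.

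I expect this last step to be the main obstacle. The notion of weak material adequacy is stated informally, and the direction of implication ("is implied by") is the reverse of Definition 4.13. I would first argue it for the base case, then show that for compound classes the schemas are at least consistent with Schema (V) via the Boolean homomorphism property of $\zeta$. For the meet clause I would check this only for instances where $\zeta(\bigwedge_{n}[F_n])^\ast\not=\emptyset$, and I would explicitly flag the failure of the converse as the reason the adequacy is only weak.
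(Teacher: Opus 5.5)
Your proposal is correct and matches the paper's proof, which consists solely of citing Theorems 4.4, 4.15, 4.16, 4.17 and 4.18; your extra remarks on formal correctness and weak material adequacy go beyond what the paper writes but are in the same spirit. Two small reference slips: the weak-adequacy definition for \emph{is possible} is Definition 4.19, not 4.18, and the necessity schema behind the complement clause is Schema (IV), i.e.\ Theorem 4.2, not Theorem 4.6 (which compares $\zeta$ with $\xi$).
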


\begin{proof}
By Theorems 4.4, 4.15, 4.16, 4.17, and 4.18.
\end{proof}

The definition of possibility formulated in Theorem 4.20 conforms with the intuition. Hence, it follows from Theorem 4.15 that a sentence is possible (i.e. possibly true) if and only if its negation is not necessary (i.e. possibly false) and from Theorem 4.16 that a disjunction of sentences is possible if and only if at least one of the sentences forming the disjunction is possible, a result naturally extending, {\it mutatis mutandis}, to material implication by Theorem 4.18. By Theorem 4.17, however, a conjunction of sentences may not be possible although all the sentences forming the conjunction are possible.

\begin{theorem}
{\it Let $(\mathfrak{A},Z,\mathcal{E},\zeta)$ be a model for modal sentential calculi. For every equivalence class $[F]$ of formulas of $\mathcal{L}$,
\begin{gather*}
\tag{VII}
\textstyle{{\rm int}~{\rm cl}~(\zeta([F])^\ast\triangle\,B)=\emptyset~\text{if and only if}~B\in\zeta([F]),}
\end{gather*}
the symmetric difference of the open-closed representative $\zeta([F])^\ast$ of the image $\zeta([F])$ of $[F]$ with a Baire set $B$ being nowhere dense in $Z$ $($`impossible'\,$)$ if and only if $\zeta([F])^\ast$ and $B$ belong to the same equivalence class of Baire sets.}
\end{theorem}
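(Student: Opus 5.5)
The plan is to unwind the definition of the quotient $\mathcal{E}=\mathcal{B}a(Z)/\mathcal{N}$ from Theorem 2.11 and reduce Schema (VII) to the statement that two Baire sets are equivalent modulo $\mathcal{N}$ exactly when their symmetric difference lies in $\mathcal{N}$. Recall that $\mathcal{N}$ is the $\sigma$-ideal of nowhere dense (Baire) sets. Two Baire sets $B_1,B_2$ therefore determine the same element of $\mathcal{E}$ if and only if $B_1\triangle B_2\in\mathcal{N}$, that is, if and only if ${\rm int}~{\rm cl}~(B_1\triangle B_2)=\emptyset$ by Definition 2.8(i).

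The argument then proceeds in three steps.

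\textbf{Step 1: the representative belongs to its class.} The open-closed representative $\zeta([F])^\ast$ is a Baire set, since open-closed sets generate $\mathcal{B}a(Z)$ by Definition 2.10. It is also an element of the equivalence class $\zeta([F])$. Hence $\zeta([F])=\{B\in\mathcal{B}a(Z):\zeta([F])^\ast\triangle B\in\mathcal{N}\}$.

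\textbf{Step 2: sufficiency.} Suppose $B\in\zeta([F])$. Then $B$ and $\zeta([F])^\ast$ lie in the same class, so $\zeta([F])^\ast\triangle B\in\mathcal{N}$. By Definition 2.8(i) this gives ${\rm int}~{\rm cl}~(\zeta([F])^\ast\triangle B)=\emptyset$.

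\textbf{Step 3: necessity.} Suppose $B$ is a Baire set with ${\rm int}~{\rm cl}~(\zeta([F])^\ast\triangle B)=\emptyset$. The symmetric difference of two Baire sets is Baire, because $\mathcal{B}a(Z)$ is a $\sigma$-field. So $\zeta([F])^\ast\triangle B$ is a nowhere dense Baire set, hence an element of $\mathcal{N}$. Therefore $B$ is equivalent to $\zeta([F])^\ast$, and $B\in\zeta([F])$.

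The only point needing care, and the expected obstacle, is to justify that the equivalence defining the quotient in Theorem 2.11 is exactly ``symmetric difference in $\mathcal{N}$''. It must not be some coarser relation such as ``symmetric difference meagre''. Theorem 2.9 makes this harmless here, since meagre sets in $Z$ are nowhere dense, so the two ideals coincide. I would invoke Theorem 2.9 explicitly at this point, so that the proof does not depend on which of the two descriptions of the ideal is taken in the Loomis--Sikorski representation.

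Finally, the modal reading (`impossible') follows from Schema (III) applied to the class $\zeta([F])\,\triangle\,[B]$. Its open-closed representative is empty precisely when $\zeta([F])=[B]$.
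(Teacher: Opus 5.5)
Your proof is correct. The sufficiency half matches the paper: by the construction of $\mathcal{B}a(Z)/\mathcal{N}$ in Theorem 2.11, $B\in\zeta([F])$ means $\zeta([F])^\ast\triangle B\in\mathcal{N}$.

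For the necessity half you take a different and more direct route. The paper argues by contradiction:
\begin{itemize}
\item It assumes $B\notin\zeta([F])$ and picks another class $[F']$ with $B\in\zeta([F'])$, tacitly using that $\zeta$ maps onto $\mathcal{E}$.
\item From $\zeta([F])^\ast\triangle\zeta([F'])^\ast\subseteq(\zeta([F])^\ast\triangle B)\cup(\zeta([F'])^\ast\triangle B)$ it concludes that this symmetric difference of two distinct open-closed sets is nowhere dense.
\item It then contradicts this via the non-empty interior of that set. The paper cites the Baire category theorem here, although a non-empty open set is never nowhere dense anyway.
\end{itemize}
You instead observe that $\zeta([F])^\ast\triangle B$ is itself a nowhere dense Baire set, hence an element of $\mathcal{N}$. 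So $B$ lies in the class of $\zeta([F])^\ast$ by the definition of the quotient.

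Your version needs neither surjectivity of $\zeta$ nor uniqueness of open-closed representatives. The paper's detour in effect re-proves the transitivity of the congruence and the uniqueness of the open-closed representative. That fits its emphasis on those representatives but adds nothing logically. Your point that $\mathcal{N}$ is exactly the ideal of nowhere dense Baire sets, a $\sigma$-ideal by Theorem 2.9, is what makes the one-line argument legitimate.

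Your closing sentence about Schema (III) is not needed and is loose as stated. The open-closed representative of $\zeta([F])\triangle[B]$ is $\zeta([F])^\ast\triangle[B]^\ast$, not $\zeta([F])^\ast\triangle B$. The two differ by the nowhere dense set $B\triangle[B]^\ast$, so the remark can be repaired, but it is best kept out of the proof.
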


\begin{proof}
Suppose that $B\in\zeta([F])$. Then $\zeta([F])^\ast\triangle\,B$ is nowhere dense in $Z$ by Theorem 2.11. Conversely, suppose that $\zeta([F])^\ast\triangle\,B$ is nowhere dense in $Z$; it is shown that $B\in\zeta([F])$. For suppose this is not the case. Then there exists an equivalence class $[F']$ of formulas of $\mathcal{L}$ such that $B\in\zeta([F'])$. It follows that $\zeta([F'])^\ast\triangle\,B$ is nowhere dense in $Z$ as well as $\zeta([F])^\ast\triangle\,\zeta([F'])^\ast$ because $\zeta([F])^\ast\triangle\,\zeta([F'])^\ast\subseteq (\zeta([F])^\ast\triangle\,B)\cup (\zeta([F'])^\ast\triangle\,B)$, which contradicts the Baire category theorem for compact Hausdorff spaces whereby the interior of meagre sets, hence, here, that of nowhere dense sets, is empty because $\zeta([F])^\ast\triangle\,\zeta([F'])^\ast$ has a non-empty interior. Hence, $B\in\zeta([F])$ as required.
\end{proof}

At first glance, the reader may contend that Schema (VII) is inconsequential: $\zeta([F])^\ast\triangle\,B$ is nowhere dense in $Z$, that is to say, $\zeta([F])^\ast$ and $B$ are impossible to discriminate from each other, if and only if $\zeta([F])^\ast$ and $B$ belong to the same equivalence class of Baire sets, and the equivalence relation defined on $\mathcal{B}a(Z)$ naturally inducing a congruence relation on $\mathcal{E}$, they are interchangeable, in Leibniz's phraseology, {\it salva veritate}.

But the present state of affairs is more subtle. For recall that Schemas (III)--(VI) are theorems of the metalanguage $\mathcal{L}^E$ of mathematical English if and only if $\zeta([F])^\ast$ is open-closed. Then, although $\zeta([F])^\ast\triangle\,B$ is nowhere dense in $Z$, that is to say, although $\zeta([F])^\ast$ and $B$ are impossible to discriminate from each other, they are {\it not} interchangeable {\it salva veritate}.

Schema (VII) uncovers, therefore, a particularly acute problem: Schemas (III)--(VI), as well as Schemas (I)--(II) for that matter, can be false in the sense that they are not theorems of $\mathcal{L}^E$. Moreover, no remedy is available. For suppose that $\mathcal{L}^{EE}$ is a meta-metalanguage capable of assessing the truth-value of each schema. Then this assessment must be indubitably true to be conclusive. A contradiction because $\mathcal{F}$ is a subalgebra of $\mathcal{E}$.

\begin{definition}
The impossibility to discriminate a true or false or necessary or possibly true or possibly false or impossible sentence which is a theorem of $\mathcal{L}^E$ from a true or false or necessary or possibly true or possibly false or impossible sentence which is not a theorem of $\mathcal{L}^E$ is called {\it agnoia}.\footnote{The word `agnoia' is the latinisation of the Ancient Greek word \textgreek{>'agnoia}, `ignorance'.}
\end{definition}

The systematic study of agnoia in relation to uncertainty modelling is the domain of {\it formal agnoiology}.\footnote{The word `agnoiology' was coined by nineteenth-century Scottish philosopher James Frederick Ferrier \cite{Ferrier54}, who saw in agnoiology, the theory of ignorance, one of the key divisions of metaphysics alongside epistemology and ontology.}

\begin{definition}
A formally correct definition of the predicate {\it is impossible}, formulated in the metalanguage $\mathcal{L}^E$ of mathematical English, is {\it weakly materially adequate} if it implies some instances of Schema (III) plus all instances of Schema (VII).
\end{definition}

\begin{theorem}
{\it Let $(\mathfrak{A},Z,\mathcal{E},\zeta)$ be a model for modal sentential calculi. The following schemas constitute a formally correct and weakly materially adequate definition of the predicate \textup{is impossible:}
\begin{align*}
&\textstyle{{\rm int}~{\rm cl}~\zeta([F])^\ast=\emptyset~\text{if and only if}~\zeta([F])^\ast=\emptyset;}\\
&\textstyle{{\rm int}~{\rm cl}~\zeta(-[F])^\ast=\emptyset~\text{if and only if}~{\rm cl}~{\rm int}~\zeta([F])^\ast=Z;}\\
&\textstyle{{\rm int}~{\rm cl}~\zeta(\bigvee_{n\in\omega}[F_n])^\ast=\emptyset~\text{if and only if}~{\rm int}~{\rm cl}~\zeta([F_n])^\ast=\emptyset~\text{for every}~n;}\\
&\textstyle{{\rm int}~{\rm cl}~\zeta(\bigwedge_{n\in\omega}[F_n])^\ast=\emptyset~\text{whenever}~{\rm int}~{\rm cl}~\zeta([F_n])^\ast=\emptyset~\text{for some}~n;}\\
&\textstyle{{\rm int}~{\rm cl}~\zeta([F_1]\rightarrow [F_2])^\ast=\emptyset~\text{if and only if}}\\
&\hspace{2cm}\textstyle{{\rm int}~{\rm cl}~\zeta(-[F_1])^\ast=\emptyset~\text{and}~{\rm int}~{\rm cl}~\zeta([F_2])^\ast=\emptyset;}\\
&\textstyle{{\rm int}~{\rm cl}~(\zeta([F])^\ast\triangle\,B)=\emptyset~\text{if and only if}~B\in\zeta([F]),}
\end{align*}
the open-closed representative $\zeta([F])^\ast$ of the image $\zeta([F])$ of $[F]$ being nowhere dense in $Z$ if and only if it is the empty set; or $\zeta(-[F])^\ast$ is nowhere dense in $Z$ if and only if $\zeta([F])^\ast$ is non-boundary in $Z$; or $\zeta(\bigvee_{n\in\omega}[F_n])^\ast$ is nowhere dense in $Z$ if and only if all $\zeta([F_0])^\ast,\zeta([F_1])^\ast,\zeta([F_2])^\ast,\ldots$ are nowhere dense in $Z$; or $\zeta(\bigwedge_{n\in\omega}[F_n])^\ast$ is nowhere dense in $Z$ whenever at least one $\zeta([F_0])^\ast,\zeta([F_1])^\ast,\linebreak\zeta([F_2])^\ast,\ldots$ is nowhere dense in $Z$; or $\zeta([F_1]\rightarrow [F_2])^\ast$ is nowhere dense in $Z$ if and only if both $\zeta(-[F_1])^\ast$ and $\zeta([F_2])^\ast$ are nowhere dense in $Z$; or $\zeta([F])^\ast\triangle\,B$ is nowhere dense in $Z$ if and only if $\zeta([F])^\ast$ and $B$ belong to the same equivalence class of Baire sets.}
\end{theorem}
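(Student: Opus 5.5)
The plan is to follow the pattern of Theorems 4.14 and 4.20 and assemble the six schemas from results already established, proving directly only the ones not yet stated. The first schema is Schema (III), which holds by Definition 4.1. The last schema is Schema (VII), which is Theorem 4.21. Together these give the weak material adequacy required by Definition 4.23: some instances of (III) and all instances of (VII).

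The second schema is the contrapositive of Theorem 4.15. Apply that theorem to $[F]$: ${\rm int}~{\rm cl}~\zeta(-[F])^\ast\not=\emptyset$ if and only if ${\rm cl}~{\rm int}~\zeta([F])^\ast\not=Z$. Negating both sides gives the schema. Equivalently, one can rerun the topological identity ${\rm int}~{\rm cl}~(Z\setminus U)=Z\setminus{\rm cl}~{\rm int}~U$ with $U=\zeta([F])^\ast$.

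The third schema is the contrapositive of Theorem 4.16. The direction ``every $\zeta([F_n])^\ast$ nowhere dense implies the join is nowhere dense'' is exactly where Theorem 2.9 is used. A countable union of nowhere dense sets is meagre, hence nowhere dense. Moreover, $\bigcup_n\zeta([F_n])^\ast$ differs from the clopen representative of the join by a nowhere dense set, so the join is nowhere dense too.

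The fourth schema is the contrapositive of Theorem 4.17. If $\zeta([F_n])^\ast$ is nowhere dense for some $n$, then its subset $\zeta(\bigwedge_n[F_n])^\ast$ is nowhere dense, since subsets of nowhere dense sets are nowhere dense.

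The fifth schema will follow from the third. Since $\zeta$ is a Boolean homomorphism, $\zeta([F_1]\rightarrow[F_2])=\zeta(-[F_1]\vee[F_2])$. Applying the third schema with the two-term join $\zeta(-[F_1])^\ast\cup\zeta([F_2])^\ast$ gives ``nowhere dense if and only if both are nowhere dense''. This is also the contrapositive of Theorem 4.18.

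Formal correctness follows as in Theorem 3.9. Each definiendum applies the predicate ${\rm int}~{\rm cl}~\cdot=\emptyset$ to the name $\zeta([F])^\ast$, and each definiens is a statement of $\mathcal{L}^E$.

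The main obstacle, such as it is, lies in the third schema. There one has to make sure the clopen representative of the countable join really agrees with the union up to a nowhere dense set. This is what Theorem 2.11 and the Loomis--Sikorski representation provide: the $\sigma$-join in $\mathcal{B}a(Z)/\mathcal{N}$ is the class of the union. Once that is granted, Theorem 2.9 finishes the argument. Everything else is contraposition of Theorems 4.15--4.18 together with citation of Definition 4.1 and Theorem 4.21.
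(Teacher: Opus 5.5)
Your proposal is correct and has the same skeleton as the paper's proof: Schema (III) from Definition 4.1, Schema (VII) from Theorem 4.21, and the four middle schemas read off from earlier results. Where you differ is in which earlier results you invoke.

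The paper takes the contraduals of the necessity theorems 4.9--4.12. That means passing through the complement identity ${\rm cl}~{\rm int}~(Z\setminus U)=Z\setminus{\rm int}~{\rm cl}~U$, which converts non-boundary into nowhere dense and exchanges joins with meets. You take the contrapositives of the possibility theorems 4.15--4.18, which requires only negating both sides, since ${\rm int}~{\rm cl}~\cdot=\emptyset$ is literally the negation of ${\rm int}~{\rm cl}~\cdot\not=\emptyset$.

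Your route is the more economical one. It shows that the impossibility schemas add nothing beyond the possibility schemas; as you say, the use of Theorem 2.9 for countable joins is inherited from Theorem 4.16. The paper's route instead presents impossibility as the dual of necessity, in the way Theorem 3.6 is the contradual of Theorem 2.9.

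Under the paper's dualization the third schema comes from Theorem 4.11 and the fourth from Theorem 4.10. The fifth schema, an ``if and only if'', cannot come from the one-directional Theorem 4.12. The contradual of Theorem 4.12 only says that ${\rm int}~{\rm cl}~\zeta([F_1]\wedge -[F_2])^\ast=\emptyset$ whenever ${\rm int}~{\rm cl}~\zeta([F_1])^\ast=\emptyset$ or ${\rm int}~{\rm cl}~\zeta(-[F_2])^\ast=\emptyset$, which is a case of the fourth schema. The fifth schema really comes from the two-term case of the dual of Theorem 4.11. Your reduction of the fifth schema to the third via $[F_1]\rightarrow[F_2]=-[F_1]\vee[F_2]$, or directly from Theorem 4.18, makes this explicit and is the cleaner justification.
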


\begin{proof}
By Definition 4.1 and the contraduals of Theorems 4.9, 4.10, 4.11, and 4.12. A glance at Theorem 4.21 completes the proof.
\end{proof}


\section{Conclusion}

This study introduced a class of models for modal sentential calculi which does not require an additional operator on the language of classical sentential calculi to capture some notion of necessity, possibility, and impossibility. The question arises as to how valuable this contribution to the literature is. The following initial appraisal can be made:\\
(i) These models are less flexible than those within the Lewis--G\"{o}del paradigm, the logician being deprived of the possibility to select different sets of axioms to describe different systems of modal logic;\\
(ii) These models do not capture Lewis's strict implication but a weak form of material implication, which may impede philosophical studies in which the concept of strict implication plays a central role;\\
(iii) These models do not account for iterated modalities, whose study is deferred to the fourth paper of this series. In fact, iterated modalities can be modelled via a hierarchy of metalanguages (i.e. set theories).

There are, however, several advantages in adopting these models over those within the Lewis--G\"{o}del paradigm. In particular:\\
(iv) These models simplify the mathematical treatment of modality by defining necessity, possibility, and impossibility directly at the level of the metalanguage instead of adding an operator $\Box$ or $\Diamond$ to the object-language;\\
(v) These models introduce a fundamental, endogenous notion of uncertainty in logic with the concept of agnoia, a semantic property of sentences which makes it impossible to discriminate, say, a true sentence $p$ which is a theorem of the metalanguage from a true sentence $p$ which is not;\\
(vi) These models lay the foundations of an efficient integration of logic and probability by taking into account the necessary conditions for the existence of a strictly positive, countably additive probability measure on a $\sigma$-complete Boolean algebra.

This last aspect of modal sentential calculi has only been touched upon in this study and is the primary subject matter of the second paper of this series on the foundations of logic and probability.


\end{document}